 \def\real{{\mathord{\mathbb R}}}
 \def\inte{{\mathord{\mathbb N}}}
\begin{document}

\title{\huge \baselineskip1cm
Feynman-Kac formula for L\'evy processes and semiclassical (Euclidean) momentum representation}

\author{ \baselineskip0.1cm
Nicolas Privault\thanks{nprivault@ntu.edu.sg}
\vspace*{-0.3cm}
\\
School of Physical and Mathematical Sciences
\vspace*{-0.3cm}
\\
Nanyang Technological University
\vspace*{-0.3cm}
\\
21 Nanyang Link, Singapore 637371
              \and
Xiangfeng Yang\thanks{xyang@cii.fc.ul.pt}$\qquad$Jean-Claude Zambrini\thanks{zambrini@cii.fc.ul.pt}
\vspace*{-0.3cm}
\\
Grupo de F\'{i}sica Matem\'{a}tica
\vspace*{-0.3cm}
\\
Universidade de Lisboa
\vspace*{-0.3cm}
\\
Av. Prof. Gama Pinto 2, 1649-003 Lisboa, Portugal}

\baselineskip0.5cm

\maketitle

\baselineskip0.5cm

\vspace{-0.5cm}

\begin{abstract}
\baselineskip0.5cm
We prove a version of the Feynman-Kac formula for L\'evy processes and
integro-differential operators,
with application to the momentum representation of suitable quantum (Euclidean) systems
whose Hamiltonians involve L\'{e}vy-type potentials.
Large deviation techniques are used to obtain the limiting behavior of the systems as the Planck constant approaches zero. It turns out that the limiting behavior coincides with fresh aspects of the semiclassical limit of (Euclidean) quantum mechanics. Non-trivial examples of L\'{e}vy processes are considered as illustrations and precise asymptotics are given for the terms in both configuration and momentum representations.
\end{abstract}

\baselineskip0.5cm

\renewcommand{\theequation}{\thesection.\arabic{equation}}
\newtheorem{theorem}{Theorem}[section]
\newtheorem{thm}{Theorem}[section]
\newtheorem{remark}[thm]{Remark}
\newtheorem{prop}[thm]{Proposition}
\theoremstyle{definition}
\newtheorem{defn}[thm]{Definition}
\numberwithin{equation}{section}
\newcommand{\dx}{\Delta x}
\newcommand{\dt}{\Delta t}

\def\keywords{\vspace{.5em}\hspace{-2em}
{\textit{Keywords and phrases}:\,\relax%
}}
\def\endkeywords{\par}

\def\MSC{\vspace{.0em}\hspace{-2em}
{\textit{AMS 2010 subject classifications}:\,\relax%
}}
\def\endMSC{\par}

\baselineskip0.5cm

\keywords{L\'{e}vy process, Feynman-Kac type formula, momentum representation, large deviations}

\MSC{Primary 60J75, 60G51; secondary 60F10, 47D06}

\baselineskip0.7cm

\section{Introduction}\label{sec:introduction}
 Consider the L\'{e}vy-type potential
\begin{equation}\label{Levy-Type-Potential}
V(x)=ibx+\frac{1}{2}\sigma^2x^2-\int_{\mathbb{R}\setminus\{0\}}
 ( e^{-ixk}-1+ixk {\bf 1}_{\{|k|\leq1\}} ) \nu(dk),
 \qquad
 x\in\mathbb{R},
\end{equation}
 where $b,\sigma\in\mathbb{R}$ and the L\'{e}vy measure $\nu$
 satisfies $\int_{\mathbb{R}\setminus\{0\}}(1\wedge k^2)\nu(dk)<\infty$,
 and the pseudo-differential operator $V(i \nabla)$ given by
\begin{align}\label{box-cross}
V(i \nabla)u(p)=- b u'(p)-\frac{\sigma^2}{2}u''(p)-\int_{\mathbb{R}\setminus\{0\}}
 ( u(p+ k)-u(p)- k u'(p) {\bf 1}_{\{|k|\leq1\}} ) \nu(dk),
\end{align}
 cf. \cite{jacobschilling} and the references therein.
 In this paper we consider the partial differential equation
\begin{align}\label{eq:momen-equation}
\begin{cases}
\displaystyle
\frac{\partial u}{\partial t}(t,p)=-U(p)u(t,p)-V(i\nabla)u(t,p),& (t,p)\in (0,\infty)\times \mathbb{R}\\
u(0,p)=g(p),&p\in\mathbb{R}
\end{cases}
\end{align}
 with prescribed initial condition $g(p)$
 and we derive the Feynman-Kac type formula
\begin{align}\label{eq:Feyn-Kac-formula}
 u (t,p)=\mathbb{E} \left[
 g(\xi_t )\exp\left(
 -\int_0^tU(\xi_s)\,\,ds\right) \,\Big|\,\xi_0 =p\right],
\end{align}
 for the solution $u(t,p)$ of \eqref{eq:momen-equation},
 cf. Theorem~\ref{th:Feynman-Kac-for-Levy} in Section~\ref{sec:Feynman-Kac}.
\\

 Here, $( \xi_t)_{t\in \real_+}$ is a real-valued
 L\'{e}vy process on $(\Omega,\mathcal{F},\mathbb{P})$,
 whose characteristic function
 is given by the L\'{e}vy-Khintchine formula
\begin{align}\label{levy-characteristic-exponent}
\mathbb{E} \left[\exp\left( - i x\cdot \xi_t\right) \right]=\exp\left(
 -
 t V(x)\right),
\end{align}
 with
$$
 V(x)=ibx+\frac{\sigma^2}{2}x^2-\int_{\mathbb{R}\setminus\{0\}}
 ( e^{-ix k}-1 ) \nu(dk),
$$
 where
 $\int_{\mathbb{R}\setminus\{0\}}(1\wedge k^2)\nu(dk)<\infty$
 and the L\'{e}vy measure $\nu(dk)$ will be chosen symmetric
 under time reversal $k\to -k$ so that \eqref{box-cross} reduces to
\begin{align*}
V(i \nabla)u(p)=- b u'(p)-\frac{\sigma^2}{2}u''(p)-\int_{\mathbb{R}\setminus\{0\}}
 ( u(p+ k)-u(p) ) \nu(dk).
\end{align*}
 The function $U(p)$ in \eqref{eq:momen-equation} is sometimes called \textit{killing rate} (see Section 3.5 in \cite{applebk}). We shall, however, avoid the term `killing' which is inappropriate to our time reversible framework, and refer simply to the `rate'.
\\

 Note that formulas such as \eqref{eq:Feyn-Kac-formula}
 have been implicitly used in the literature,
 for example for the construction of subsolutions
 and the derivation of blow-up criteria for
 semilinear PDEs of the form
$$
\begin{cases}
\displaystyle
\frac{\partial u}{\partial t}(t,p)=
 u^{1+\beta} (t,p)
-V(i\nabla)u(t,p),& (t,p)\in (0,\infty)\times \mathbb{R}\\
u(0,p)=g(p),&p\in\mathbb{R},
\end{cases}
$$
 where $g$ is a nonnegative function, $\beta >0$,
 see e.g. \cite{blmw}, \cite{prlm2}, when
 $V(i \nabla) = - ( - \Delta)^{\alpha/2}$ is a
 fractional Laplacian power, $0<\alpha\le2$,
 and \cite{prlm} for the generator
$$
 V(i \nabla)u(p)= - \int_{\mathbb{R}\setminus\{0\}}
 ( u(p+ k)-u(p)) \frac{e^{-k}}{k} dk,
$$
 of the standard gamma process.
\\

 In this paper, based on Feynman and Hibbs' suggestion
 that in the momentum representation of
 (Euclidean) quantum physics, the underlying
 stochastic processes should belong to some class of time reversible
 jump processes (cf. \cite{Feyn2}),
 we apply our result to the
 momentum representation
\begin{equation}
\label{p2}
 \hat{H}u(p)=\mathcal{F}H\mathcal{F}^{-1} u(p)
 \hat{H}u(p)=\frac{p^2}{2}u(p)+V(i\hbar \nabla)u(p),
\end{equation}
 of the original
 (configuration representation) Hamiltonian
$$
 H=-\frac{\hbar^2}{2} \triangle+V,
$$
 where $\hbar$ is the Planck constant
 and $\mathcal{F}$ is the Fourier transform
$$\mathcal{F}u(p)=\frac{1}{\sqrt{2\pi\hbar}}\int_{\mathbb{R}}e^{-ipx/\hbar}u(x)dx
,
$$
 for any rapidly decreasing function $u(\cdot)\in S(\mathbb{R})$,
 with inverse
$$\mathcal{F}^{-1}v(x)=\frac{1}{\sqrt{2\pi\hbar}}\int_{\mathbb{R}}e^{ipx/\hbar}v(p)dp,\qquad v(\cdot)\in S(\mathbb{R}).$$
 The corresponding
 Euclidean version of the associated Schr\"{o}dinger equation in
 momentum representation (in the sense of \cite{prz2} and \cite{prz})
 reads
\begin{align}\label{eq:momen-original}
-\hbar \frac{\partial \hat{\eta}^*}{\partial t}(t,p)=\hat{H}\hat{\eta}^*(t,p),
\end{align}
 which is one of the two adjoint equations whose positive solutions are needed to produce quantum-like probability measures in \cite{prz2} and \cite{prz}.
\\

 In addition we will consider a general rate function $U(p)$
 instead of $p^2/2$ in \eqref{p2} and deal with the equation
\begin{align} \label{eq:momen-equation2}
\begin{cases}
\displaystyle
\hbar \frac{\partial \eta^*}{\partial t}(t,p)
 =
 -U(p)\eta^*(t,p)-V(i\hbar \nabla)\eta^*(t,p),&
 (t,p)\in (0,\infty)\times \mathbb{R}
\\
 \eta^*(0,p)=g(p),&p\in\mathbb{R},
\end{cases}
\end{align}
 instead of \eqref{eq:momen-original} with
 initial condition $g(\cdot)\in S(\mathbb{R})$.
 Finally we will study the limiting behavior of \eqref{eq:momen-equation2}
 as $\hbar$ approaches $0$.
\\

 Usually there are two different methods to derive a Feynman-Kac type formula:
\begin{itemize}
\item martingale methods combined with
 the It\^{o} formula (see for example Section 5.7 of
 \cite{Karatzas-Shreve-1991} and Section 3.19 of
\cite{Rogers-Williams-2000}), or
\item semigroup methods (see Section 6.7.2 in \cite{applebk}, Section 3.19 in \cite{Rogers-Williams-2000} and Section 1.5 of \cite{Freidlin-Wentzell-1998}).
\end{itemize}
 The first method is generally applicable under the main assumption that the partial integro-differential equation \eqref{eq:momen-equation} has a solution with suitable growth assumption (see Proposition 4 in \cite{Nualart-Schoutens-2001} and Theorem 4.1 in \cite{Chan-1999} for more details by using stochastic integrals driven by L\'{e}vy processes).
 However, an unbounded rate $U(p)$ function
 is not compatible with the conditions of \cite{Nualart-Schoutens-2001} and \cite{Chan-1999}. In this paper, we use the second (semigroup) method to prove the existence (and even the uniqueness) of a solution for \eqref{eq:momen-equation} and to induce a Feynman-Kac type formula for the solution simultaneously. It is also known that the semigroup method (with infinitesimal generator) only works for (at least) bounded rates. In order to deal with the unbounded rate, in this paper we use this method in a weaker form (explained at the beginning of Section \ref{sec:Feynman-Kac}) which is explicitly presented in Theorem \ref{th:Feynman-Kac-for-Levy} and Theorem \ref{th:Feynman-Kac-NoIndepIncream}.
\\

 In addition we also study the limiting behaviors of the solutions
given by Feynman-Kac type formulas in terms of large deviations.
In particular we show in detail that these limiting behaviors have exactly
the same patterns as the semiclassical limits of (Euclidean)
quantum mechanics in several special cases, and we derive precise asymptotics as well for the drift terms in both configuration and momentum representations.
\\

This paper is organized as follows.
In Section~\ref{sec:Feynman-Kac} we present the Feyman-Kac
formula \eqref{eq:Feyn-Kac-formula} and its proof.
The limiting behaviors of the solution given by Feynman-Kac type formula are
then studied in terms of large deviations in Section \ref{sec:limit-behavior}.
\section{Feynman-Kac type formulas}\label{sec:Feynman-Kac}
Let $A$ be the infinitesimal generator of the semigroup
$$
 T_tf(x) 
 :=\mathbb{E}[f(\xi_t)|\xi_0=x],
$$
 of a time-homogeneous Markov process $\xi_t$
 where $f$ is in a space $\mathbf{B}$ of bounded measurable
 functions with the uniform norm, and such that
$$
 \lim_{t\to 0^+}\sup_x\left| \frac{T_tf(x)-f(x)}{t} -Af(x)\right|=0,
 \qquad
 f \in \mathbf{B}.
$$
 For every $f$ in the domain of $A$,
 the function 
$$
 u(t,x):=T_tf(x)=\mathbb{E}_x [ f(\xi_t) ] 
$$
 is the unique (in the sense of boundedness) solution of
$$
 \frac{\partial u}{\partial t}(t,x)=Au(t,x)\,\, \text{ with } u(0,x)=f(x).
$$
 The main ideas for the derivation of
 Feynman-Kac type formulas by semigroup methods
 for differential equations of the form
\begin{equation}
\label{dkolddd}
 \frac{\partial v}{\partial t}(t,x)=Av(t,x)-c(x)v(t,x),\,\,\,\, v(0,x)=f(x)
\end{equation}
 with rate $c(x)$, can be formulated as follows,
 see Section 3.19 of \cite{Rogers-Williams-2000} and
 Section 1.5 of \cite{Freidlin-Wentzell-1998} for details.
\\

 The idea of the semigroup method is to show that
$$
 \widetilde{A}f(x):=Af(x)-c(x)f(x)
$$
 is the infinitesimal generator of the new semigroup
$$
 \widetilde{T}_tf(x):=\mathbb{E}_x\left[
 f(\xi_t)\exp \left( -\int_0^tc(\xi_s)ds \right) \right],
$$
 i.e. $v(t,x):=\widetilde{T}_tf(x)$ is a solution of \eqref{dkolddd}.
 This only holds under (strong) assumptions,
 including the boundedness of $c(x)$,
 cf. \cite{Rogers-Williams-2000}.
 Note that in this case the derivative
$$\frac{\partial v}{\partial t}(t,x)=Av(t,x)-c(x)v(t,x)$$
 is uniform in $x,$ in the sense that
\begin{align}\label{eq:uniform-conv}
\lim_{h\to 0}\sup_x\left|
\frac{v(t+h,x)-v(t,x)}{h}-(Av(t,x)-c(x)v(t,x))\right|=0,
\end{align}
 for every fixed $t>0$.
 However, pointwise convergence is sufficient in \eqref{eq:uniform-conv}
 for the analysis of a differential equation,
 and we choose to
 weaken the boundedness condition on $c(x)$ when deriving our
 Feynman-Kac formulas.
\\

 In the sequel, $( \xi_t )_{t\geq0}$
 will be a L\'{e}vy process
 defined on $(\Omega,\mathcal{F},\mathbb{P})$
 whose characteristic exponent is given by
 \eqref{levy-characteristic-exponent},
 and which is Markov
 with respect to the increasing filtration $\mathcal{P}_t=\sigma(\xi_s,0\leq s\leq t)$.
 Furthermore, $( \xi_t )_{t\geq0}$ has a modification (still denoted by $( \xi_t )_{t\geq0}$) which is
 right continuous with left limits. We say that the process $\xi$ has \textit{bounded jumps} if there is $C>0$ such that
$$\sup_{t\geq 0}\left|\xi_t-\xi_{t^-}\right|<C\quad\text{a.s.}$$
 It follows from Corollary 2.4.9 of \cite{applebk} that $\xi$ has bounded jumps if and only if $\nu$ has a bounded support.
\\

 We denote by
$$
S(\mathbb{R},\mathbb{C}) :=\left\{f(\cdot):\mathbb{R}\rightarrow\mathbb{C} \text{ with }\sup_x\left|x^mf^{(n)}(x)\right|<\infty,
 \quad
 \forall \ m,n \in \inte
 \right\},
$$
 the Schwartz space
 of all infinitely differentiable rapidly decreasing functions
 with bounded $n$-th derivatives $f^{(n)}(x)$
 on the real line $\mathbb{R}$ to the complex plane $\mathbb{C}$,
 with $S(\mathbb{R}) = S(\mathbb{R},\mathbb{R})$.
\begin{theorem}\label{th:Feynman-Kac-for-Levy}
 Assume that the rate function $U(p)$ is smooth and
 such that
\begin{equation}
\label{jkld}
 U(p)\geq c|p|^a,
 \qquad
 |p|>C,
\end{equation}
 for some $c>0,a\geq1$ and $C>0$,
 and that its derivatives satisfy
\begin{equation}
\label{jkld.1}
 \left|U^{(n)}(p)\right|\leq c(n)\left(1+|p|^{k(n)}\right), \qquad |p|>C,
\end{equation}
 for some $c(n), k(n) >0$.
 Then \eqref{eq:Feyn-Kac-formula} is a solution to \eqref{eq:momen-equation}
 provided $\nu$ has moments of all orders
 and $g\in S(\mathbb{R})$.
\end{theorem}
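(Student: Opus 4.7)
The plan is to follow the weakened semigroup framework described in the introduction to Section~\ref{sec:Feynman-Kac}: define $u(t,p)$ by the Feynman-Kac expectation \eqref{eq:Feyn-Kac-formula}, verify the needed regularity and the semigroup property, and then compute the infinitesimal generator of $\widetilde{T}_t$ pointwise rather than uniformly in $p$.

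First, I would check that $u(t,\cdot)$ is well-defined and has enough smoothness and spatial decay for the right-hand side of \eqref{eq:momen-equation} to make sense. Finiteness is immediate since $U(p)\geq 0$ on the tails by \eqref{jkld} and $g\in S(\mathbb{R})$ is bounded. To obtain smoothness and decay sufficient to apply $V(i\nabla)$ via \eqref{box-cross}, I would use the shift $\xi_s = p + (\xi_s-\xi_0)$, exploit the assumption that $\nu$ has moments of all orders (so $\xi_t$ has all moments), and differentiate under the expectation in $p$. The polynomial growth of the derivatives $U^{(n)}$ granted by \eqref{jkld.1} is controlled by the exponential decay supplied by $e^{-\int_0^t U(\xi_s)ds}$ together with the Schwartz decay of $g$, via dominated convergence. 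The outcome is that $u(t,\cdot)$ is smooth with sufficient decay to be a legitimate test element for $V(i\nabla)$.

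Next, the semigroup property $\widetilde{T}_{t+s}g=\widetilde{T}_t\widetilde{T}_s g$ follows from the Markov property of $\xi$ by conditioning on $\mathcal{P}_t$ and splitting $\int_0^{t+s}U(\xi_r)dr=\int_0^t U(\xi_r)dr+\int_t^{t+s}U(\xi_r)dr$. Given this, the time derivative at any $t>0$ reduces to the pointwise generator calculation at $h=0$ applied to $f:=\widetilde{T}_tg$, namely
\begin{equation*}
\frac{\widetilde{T}_hf(p)-f(p)}{h}=\frac{1}{h}\mathbb{E}_p\Bigl[f(\xi_h)\bigl(e^{-\int_0^h U(\xi_s)ds}-1\bigr)\Bigr]+\frac{T_hf(p)-f(p)}{h}.
\end{equation*}
The second term converges to $-V(i\nabla)f(p)$ since $-V(i\nabla)$ is the usual L\'evy generator of $T_t$ on the Schwartz-type class to which $f$ belongs. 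For the first term, Taylor expansion of the exponential together with right-continuity of $\xi$ and the estimate \eqref{jkld.1} produces a leading contribution $-U(p)f(p)$ plus a remainder of order $o(1)$ as $h\to 0$, the remainder being controlled by the moment bounds on $\xi_h$ under $\mathbb{P}_p$. Adding the two contributions gives $\partial_t u(t,p)=-U(p)u(t,p)-V(i\nabla)u(t,p)$, and the initial condition $u(0,p)=g(p)$ is immediate.

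The main obstacle is the unboundedness of $U$, which prevents direct use of the classical semigroup theory of \cite{Rogers-Williams-2000} with uniform convergence. This forces two concessions that must be organized carefully: convergence is only pointwise in $p$, and one must propagate Schwartz-type regularity through $\widetilde{T}_t$ so that both the multiplier $U(p)u(t,p)$ and the non-local operator $V(i\nabla)$ act on $u(t,\cdot)$ in a genuinely integrable way against $\nu$. Both are handled by the interplay between the polynomial growth conditions \eqref{jkld}--\eqref{jkld.1} on $U$ and the assumption that $\nu$ has moments of all orders; together these ensure that every error term appearing in the pointwise generator computation is dominated by an integrable function independent of the small parameter $h$.
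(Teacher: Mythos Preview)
Your overall architecture matches the paper's: define $\widetilde{T}_tg$ by the Feynman--Kac expectation, show that $\widetilde{T}_t$ preserves a Schwartz-type class, and compute the generator pointwise in $p$ (rather than uniformly). The only methodological difference is that you compute $h^{-1}(\widetilde{T}_hf-f)$ via the additive splitting $\widetilde{T}_hf-f=\mathbb{E}_p[f(\xi_h)(e^{-\int_0^hU}-1)]+(T_hf-f)$, whereas the paper uses the martingale identity $\widetilde{T}_tg-g=\int_0^t\widetilde{T}_s\widetilde{A}g\,ds$ and then passes to the limit inside the integral. Either route is fine; the paper's has the small advantage that the remainder terms are already organized as $\widetilde{T}_s\widetilde{A}g$, while yours requires separate control of the quadratic remainder in the exponential. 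The paper also adds a short Step~3 upgrading the right derivative to a two-sided one via continuity of $t\mapsto\widetilde{A}\widetilde{T}_tg$, which you do not mention.

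The genuine gap is in your regularity claim. You assert that $u(t,\cdot)=\widetilde{T}_tg$ is smooth with Schwartz-type decay ``via dominated convergence,'' citing the exponential factor $e^{-\int_0^tU(\xi_s)ds}$ as the source of decay in $p$. But after the shift $\xi_s=p+\eta_s$ with $\eta_0=0$, that factor is $e^{-\int_0^tU(p+\eta_s)ds}$, and the random increment $\eta_s$ can cancel $p$; nothing in your outline prevents this. In the paper this is the heart of the argument (its Step~2): one first bounds $\partial_p^n\widetilde{T}_tg(p)$ by a polynomial in $p$ times $\mathbb{E}\bigl[e^{-\int_0^tU(p+\eta_s)ds}\bigr]$, then uses \emph{convexity} of $y\mapsto c|y|^a$ (this is exactly where $a\ge 1$ enters) to replace $\int_0^tU(p+\eta_s)ds$ by a function of the time-average $p+t^{-1}\int_0^t\eta_s\,ds$, and finally splits the expectation according to whether $|t^{-1}\int_0^t\eta_s\,ds|$ is small or large compared to $|p|^{1/2}$, handling the large-deviation piece with Chebyshev and the moment hypothesis on $\nu$. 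Without this step you cannot conclude $\widetilde{T}_tg\in S(\mathbb{R})$, and then neither $U(p)u(t,p)$ nor the non-local action of $V(i\nabla)$ on $u(t,\cdot)$ is under control. Your proposal does not invoke $a\ge1$ anywhere, which is a signal that this mechanism is missing.
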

\begin{proof}
 Throughout this proof we take $U(p) = p^2/2$
 without loss of generality.
 For any bounded measurable $f(x)$, let
 $T_tf(x)=\mathbb{E}[f(\xi_t+x) ]$
 denote the semigroup associated with the L\'evy process
 $\xi_t$
 started at $\xi_0=0$.
 For any function $g\in S(\mathbb{R}),$ the infinitesimal generator $A$ of
 $\xi_t$ is
\begin{align*}
Ag(x)=b g'(x)+\frac{\sigma^2}{2}g''(x)+ \int_{\mathbb{R}\setminus\{0\}}\left(
 g(x+ k)-g(x)\right) \nu(dk),
\end{align*}
 see Theorem 3.3.3 in \cite{applebk}.
 We also define
$$
 \widetilde{T}_tg(x)=\mathbb{E}\left[g(\xi_t)\exp \left( -\frac{1}{2} \int_0^t | \xi_s |^2 \,\,ds \right)\Big| \xi_0=x\right]$$
 and let $\widetilde{A}g(x): = Ag(x) - x^2g(x)/2$.
\\

\textit{Step 1:} We show for each fixed $x\in\mathbb{R}$ and $g\in S(\mathbb{R})$ that
\begin{align}\label{eq:aim-to}
\lim_{t\to 0^+}t^{-1}\left(\widetilde{T}_tg(x)-g(x)\right)=\widetilde{A}g(x).
\end{align}
 By the relation
\begin{align}\label{martingale-approach}
\widetilde{T}_tg(x)-g(x)=\int_0^t\widetilde{T}_s\widetilde{A}g(x)ds
\end{align}
 which follows from martingale arguments,
 cf. e.g. Section~3.19 of \cite{Rogers-Williams-2000},
 we get
\begin{align*}
\lim_{t\to 0^+}t^{-1}&\left(\widetilde{T}_tg(x)-g(x)\right)=\lim_{t\to 0^+}t^{-1}\int_0^t\widetilde{T}_s\widetilde{A}g(x)\\
&=\lim_{t\to 0^+}t^{-1}\int_0^t\mathbb{E}\left[\left(Ag(\xi_s)-\frac{1}{2} |\xi_s |^2g(\xi_s)\right)\exp\left(-\frac{1}{2} \int_0^s |\xi_u|^2\,\,du\right)\,\,\Big|\,\, \xi_0=x\right]ds\\
&=\lim_{t\to 0^+}\int_0^1\mathbb{E}\left[\left(Ag(\xi_{t\theta})-\frac{1}{2} |\xi_{t\theta} |^2g(\xi_{t\theta})\right)\exp\left( -\frac{1}{2} \int_0^{t\theta} |\xi_u |^2\,\,du\right) \,\,\Big|\,\, \xi_0 =x\right]d{\theta}.
\end{align*}
 The first term is easily to be computed as
\begin{align*}
\lim_{t\to 0^+}\int_0^1\mathbb{E} \left[Ag(\xi_{t\theta} )\exp\left( -\frac{1}{2} \int_0^{t\theta} | \xi_u |^2\,\,du\right)\,\,\Big|\,\, \xi_0=x\right]d{\theta}=Ag(x)
\end{align*}
since $Ag(x)$ is bounded and continuous in $x.$ The second limit
\begin{align}\label{temp-8-8}
\lim_{t\to 0^+}\int_0^1\mathbb{E}\left[\xi_{t\theta} ^2\cdot g(\xi_{t\theta})\exp\left( -\frac{1}{2} \int_0^{t\theta} |\xi_u |^2\,\,du\right) \,\,\Big|\,\, \xi_0 =x\right]d{\theta}=x^2g(x)
\end{align}
since $x^2g(x)$ is bounded and continuous in $x$ which is from the fact that $g\in S(\mathbb{R}).$ However, here we present another proof of (\ref{temp-8-8}) which works for any (only) bounded and continuous $g.$ The reason why we give this proof is that, in Theorem \ref{th:Feynman-Kac-NoIndepIncream} below, the initial $g$ is only assumed to be bounded and continuous, and such a proof is needed there. Under the existence of moments of all orders for $\nu$
 we have the moment bounds
\begin{align}\label{eq:finite-moments}\sup_{t\in[0,1]}\mathbb{E}\left[(\xi_{t})^m\,|\, \xi_0=x\right]<\infty, \qquad
 m \geq 1,
\end{align}
 cf. e.g. \cite{bassan} and Relations~(1.1)-(1.2) in \cite{momentpoi}.
 We write
\begin{align*}
&\lim_{t\to 0^+}\int_0^1\mathbb{E}\left[ |\xi_{t\theta}|^2g(\xi_{t\theta})\exp\left( -\frac{1}{2} \int_0^{t\theta}|\xi_u|^2\,\,du\right)\,\,\Big|\,\, \xi_0=x\right]d{\theta}\\
&=\lim_{t\to 0^+}\int_0^1\mathbb{E}\left[ |\xi_{t\theta}|^2g(\xi_{t\theta})\,\,\Big|\,\,\xi_0=x\right]d{\theta}\\
&\quad+\lim_{t\to 0^+}\int_0^1\mathbb{E}\left[ |\xi_{t\theta}|^2g(\xi_{t\theta})\left(\exp\left(-\frac{1}{2} \int_0^{t\theta}|\xi_u|^2\,\,du\right)-1\right)\,\,\Big|\,\, \xi_0=x\right]d{\theta}\\
& = : I+II,
\end{align*}
 with $I= x^2g(x)$ because of \eqref{eq:finite-moments}.
 The second component vanishes by noting that
\begin{align*}
\mathbb{E}\left[\left(\exp\left( -\frac{1}{2} \int_0^{t\theta}|\xi_u|^2\,\,du\right) -1\right)^2\,\,\Big|\,\, \xi_0=x\right]d{\theta}=o(t).
\end{align*}

\textit{Step 2:} We show that
 $\widetilde{T}_tg(\cdot)\in S(\mathbb{R})$ for all
 $g\in S(\mathbb{R})$ and $t>0$.
 From the independence and stationarity of
 increments of $\xi_t$ we have
\begin{align*}
\widetilde{T}_tg(x)&=\mathbb{E}\left[g(\xi_t)\exp\left( -\frac{1}{2} \int_0^t|\xi_s|^2\,\,ds\right)\,\,\Big|\,\, \xi_0=x\right]\\
&=\mathbb{E}\left[g(\xi_t+x)\exp\left( -\frac{1}{2} \int_0^t|\xi_s+x|^2\,\,ds\right) \right],
\end{align*}
 hence by the bound $\mathbb{E}[ | \xi_t |^{k}]<\infty$, $k \geq 1$,
 the $n$-th partial derivatives $\partial^n \widetilde{T}_tg(x)/\partial x^n$ can be bounded as
\begin{align}\label{partial-bound}
\left(\frac{\partial^n \widetilde{T}_tg(x)}{\partial x^n}\right)^2\leq c(n)(1+x^{2k(n)})\mathbb{E}\left[\exp\left( -\frac{1}{2} \int_0^t|\xi_s+x|^2\,\,ds\right) \right]
\end{align}
for some positive constant $c(n)$ depending on $n$ and an integer $k(n)$ depending on $n$.
 For any positive integer $m,$ we analyze the part with expectation in \eqref{partial-bound} as follows
\begin{equation}\label{convex-use}
\begin{aligned}
&\mathbb{E}\left[\exp\left( -\frac{1}{2} \int_0^t|\xi_s+x|^2\,\,ds\right) \right]\leq\mathbb{E}\left[\exp\left( -\frac{t}{2}\left(x+\frac{1}{t}\int_0^t\xi_s\,ds\right)^2\right) \right]\\
&=\mathbb{E}\left[ {\bf 1}_{\left\{\omega: \left| \int_0^t\xi_s\,ds\right|\leq t |x|^{1/2}\right\}}\exp\left( -\frac{t}{2}\left(x+\frac{1}{t}\int_0^t\xi_s\,ds\right)^2\right) \right]\\
&\,\,\,\,+\mathbb{E}\left[ {\bf 1}_{\left\{\omega: \left|
 \int_0^t\xi_s\,ds\right|> t |x|^{1/2}\right\}}\exp\left( -\frac{t}{2}\left(x+\frac{1}{t}\int_0^t\xi_s \,ds\right)^2\right) \right] = : \ell_1+\ell_2.
\end{aligned}
\end{equation}
For $\ell_2,$ it follows from Tchebychev type estimates that
\begin{align*}
\ell_2\leq\mathbb{P} \left(
 \left\{\omega: \left|\frac{1}{t}\int_0^t\xi_s \,ds\right|>|x|^{1/2}\right\}
 \right)
 \leq
 \frac{1}{|x|^m t^{2m}}
 \mathbb{E} \left[\left( \int_0^t\xi_s \,ds\right)^{2m}\right].
\end{align*}
Thus $\sup_x|x|^m\ell_2<\infty$ holds since $\mathbb{E} [(\xi_t )^{k}]<\infty.$ It is easy to see that for large enough $|x|$ we have
$\ell_1\leq e^{ -t|x|/2}$,
which yields
$$\sup_x\left|x^m\frac{\partial^n \widetilde{T}_tg(x)}{\partial x^n}\right|<\infty.$$

\textit{Step 3:} In this step, we replace $g(x)$ in \eqref{eq:aim-to} by $\widetilde{T}_tg(x)$ to get
$$\frac{\partial^+ \widetilde{T}_tg(x)}{\partial t}=\widetilde{A}\widetilde{T}_tg(x),\quad \widetilde{T}_0g(x)=g(x).$$
In this step, we show that the right-hand derivative can be replaced by the two-sided derivative $\partial \widetilde{T}_tg(x)/\partial t.$ To this end, we just need to show that the right-hand derivative $\widetilde{A}\widetilde{T}_tg(x)$ is continuous in $t.$ Let us recall
$$\widetilde{A}\widetilde{T}_tg(x)=A\widetilde{T}_tg(x)-\frac{1}{2}x^2\widetilde{T}_tg(x).$$
The continuity in $t$ is then easily from the definitions of $A$ and $\widetilde{T}_t$ with the help of \eqref{martingale-approach} repeatedly.
\end{proof}
 Note that
 in Theorem \ref{th:Feynman-Kac-for-Levy} the condition \eqref{jkld} (i.e. $U(p)\geq c|p|^a$ for some $a\geq1$) is necessary
 in general
 since in the first inequality of \eqref{convex-use} we
 need the convexity of $c|p|^a$ in $p.$
 However, condition \eqref{jkld} can be weakened to any $a>0$
 by assuming in addition that the process
 $( \xi_t )_{t\geq0}$ is a subordinator, i.e.
 a one-dimensional a.s. non-decreasing
 L\'{e}vy process, cf. Section 1.3.2 in \cite{applebk},
 which will remain a.s. positive provided $\xi_0 \geq a_*>0.$
 For instance, Theorem~\ref{th:Feynman-Kac-for-Levy.1} applies in
 case $U(p)=p^{1/2}.$
\begin{theorem}
\label{th:Feynman-Kac-for-Levy.1}
 Assume that $( \xi_t )_{t\geq0}$ is a
 subordinator and that in addition to
 \eqref{jkld.1}
 the smooth rate function $U(p)$
 satisfies
$$
 U(p)\geq c|p|^a,
 \qquad
 |p|>C,
$$
 for some $c>0,a >0$ and $C>0$.
 Then \eqref{eq:Feyn-Kac-formula} is a solution to \eqref{eq:momen-equation}
 provided $\nu$ has moments of all orders
 and $g\in S(\mathbb{R})$.
\end{theorem}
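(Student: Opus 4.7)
The plan is to recycle the three-step strategy of Theorem~\ref{th:Feynman-Kac-for-Levy} almost verbatim, with the key simplification that the subordinator structure replaces convexity of $|p|^a$ by a pointwise monotonicity argument. As before, I would rewrite $\widetilde{T}_tg(x)=\mathbb{E}[g(\xi_t+x)\exp(-\int_0^tU(\xi_s+x)\,ds)]$ (so the effective initial condition for the process is $p=x$, with $\xi_0=0$ and $\xi$ a subordinator, hence $\xi_s\geq 0$ for all $s$), and then carry out:

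\textbf{Step 1 (generator identity at $t=0$).} Exactly as in the previous proof, \eqref{martingale-approach} and the mean value theorem reduce \eqref{eq:aim-to} to showing $\lim_{t\to0^+}\int_0^1 \mathbb{E}[U(\xi_{t\theta}+x)g(\xi_{t\theta}+x)e^{-\int_0^{t\theta}U(\xi_u+x)du}\mid\xi_0=0]\,d\theta=U(x)g(x)$ and the analogous limit for $Ag$. Because $g\in S(\mathbb{R})$ and the polynomial derivative bound \eqref{jkld.1} controls $U(\xi_{t\theta}+x)$ in $L^2$ via the assumed moments of $\nu$, both limits follow from right-continuity of $\xi$ at $0$ and bounded/dominated convergence, precisely as in the quadratic case.

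\textbf{Step 2 (Schwartz-space preservation, the crux).} Here the convexity used in \eqref{convex-use} is no longer available for $a<1$, but it is no longer needed. Since $\xi$ is non-decreasing with $\xi_0=0$, we have $\xi_s+x\geq x$ for every $s\geq 0$, so for $x>C$ the lower bound $U(\xi_s+x)\geq c(\xi_s+x)^a\geq cx^a$ is immediate and gives
\[
\mathbb{E}\!\left[\exp\!\left(-\int_0^t U(\xi_s+x)\,ds\right)\right]\leq e^{-ctx^a},\qquad x>C,
\]
for any $a>0$, replacing the delicate splitting into $\ell_1+\ell_2$. Differentiating $\widetilde{T}_tg(x)$ under the expectation produces polynomial-in-$x$ factors (from derivatives of $U$ via \eqref{jkld.1} and from $g\in S(\mathbb{R})$), all absorbed by the exponential decay $e^{-ctx^a}$; this yields $\sup_x|x^m\partial_x^n\widetilde{T}_tg(x)|<\infty$ for all $m,n$, modulo the usual restriction of the analysis to $x\geq a_*>0$ where the process remains in the regime where $U$ dominates.

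\textbf{Step 3 (two-sided derivative and continuity in $t$).} Identical to the previous proof: applying Step~1 with $g$ replaced by $\widetilde{T}_tg\in S(\mathbb{R})$ gives the right derivative $\partial^+\widetilde{T}_tg/\partial t=\widetilde{A}\widetilde{T}_tg$, and continuity in $t$ of $A\widetilde{T}_tg(x)-U(x)\widetilde{T}_tg(x)$ follows again from \eqref{martingale-approach}. The main (expected) obstacle is Step~2; but once one recognizes that the subordinator hypothesis trivially produces the pointwise lower bound $U(\xi_s+x)\geq U(x)$ (up to constants) for $x$ large, the convexity requirement $a\geq 1$ evaporates and the remainder of the argument is cosmetic.
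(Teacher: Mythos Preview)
Your three-step plan matches the paper's proof exactly in Steps~1 and~3. In Step~2 you take a slightly cleaner route than the paper: the paper still performs a two-term splitting as in \eqref{convex-use}, now based on the event $\{\sup_{0\le s\le t}\xi_s \le x^{1/2}\}$, and exploits the subordinator property through the identity $\mathbb{P}(\sup_{0\le s\le t}\xi_s > x^{1/2}) = \mathbb{P}(\xi_t > x^{1/2})$ followed by a Chebyshev estimate for the second term $\ell_2$, while bounding $\ell_1 \le \exp(-ctx^{a/2})$. Your observation that $\xi_s\ge 0$ gives $\xi_s+x\ge x>C$ pointwise, hence $U(\xi_s+x)\ge c x^a$ and $\mathbb{E}[\exp(-\int_0^t U(\xi_s+x)\,ds)]\le e^{-ctx^a}$ directly, dispenses with the splitting altogether and even yields a sharper decay rate. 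Both arguments are correct; yours is simply the more economical use of the subordinator hypothesis, while the paper's version stays closer in form to the estimate \eqref{convex-use} it is modifying.
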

\begin{proof}
 We adjust \eqref{convex-use}
 to make the arguments go through for the new
 (possibly not convex) function $cp^a$.
 Namely we write
\begin{equation}\label{convex-use-2}
\begin{aligned}
&\mathbb{E}\left[\exp\left( -\int_0^tU(\xi_s+x)\,\,ds\right) \right]\leq\mathbb{E}\left[\exp\left( - c \int_0^t\left(\xi_s+x\right)^a\,ds\right) \right]\\
&=\mathbb{E}\left[ {\bf 1}_{\left\{\omega: \,\,\sup_{0\leq s\leq t}\xi_s\leq x^{1/2}\right\}}\exp\left( - c \int_0^t\left(\xi_s+x\right)^a\,ds\right) \right]\\
&\,\,\,\,+\mathbb{E} \left[ {\bf 1}_{\left\{\omega: \,\,\sup_{0\leq s\leq t}\xi_s > x^{1/2}\right\}}\exp\left( - c \int_0^t\left(\xi_s +x\right)^a\,ds\right) \right]:=\ell_1+\ell_2,
 \quad
 x>0,
\end{aligned}
\end{equation}
 with $\ell_1\leq\exp\left( - c t x^{a/2}\right)$
 for large $x$ as above.
 The second term is estimated as $\ell_2\leq\mathbb{P}\left(
 \omega: \,\,\sup_{0\leq s\leq t}\xi_s > x^{1/2}\right)
 =\mathbb{P}
 \left(
 \omega: \,\,\xi_t > x^{1/2}\right),$
 then Tchebychev type estimates complete the argument.
\end{proof}
 The condition $g\in S(\mathbb{R})$ in Theorem \ref{th:Feynman-Kac-for-Levy}
 is somewhat restrictive and
 can also be relaxed into assuming only continuity and boundedness of $g$
 if we restrict ourselves to a special family
 $( \zeta_t )_{t\geq0}$ of L\'{e}vy processes which
 are real-valued pure jump processes with infinitesimal generator
$$\mathcal{A}f(p)= \int_{\mathbb{R}} ( f(p+ k)-f(p) ) \mu(dk)$$
 for bounded $f(\cdot):\mathbb{R}\rightarrow \mathbb{R},$
 where $\mu(\mathbb{R})<\infty$ and $\mu(dk)$ is symmetric under $k\to -k$, cf. e.g. Section 4.2 of \cite{Ethier-Thomas-2005}.
 In this setting we consider
 the partial integro-differential equation
\begin{align}\label{eq:momen-equation-no-indep-incre}
\begin{cases}
\displaystyle
\frac{\partial u_*}{\partial t}(t,p)=-U(p) u_*(t,p)+\int_{\mathbb{R}} 
 ( u_*(t,p+ k)-u_*(t,p) ) 
 \mu(dk),& (t,p)\in (0,\infty)\times \mathbb{R}\\
u_*(0,p)=g(p), \quad p\in\mathbb{R}, &
\end{cases}
\end{align}
where $U(p)$ is continuous and satisfies $c_1\leq U(p)\leq c_2(1+|p|^M)$ for some $c_1\in\mathbb{R},c_2>0$ and $M>0.$
 This allows us in particular
 to work in the momentum representation of (Euclidean) quantum mechanics
 with a general potential $U(p)$ instead of only $p^2/2$ as
 in \cite{prz}.
\begin{theorem}\label{th:Feynman-Kac-NoIndepIncream}
 Suppose that $\mu(dk)$ is symmetric under $k\to -k$,
 $\mu(\mathbb{R})<\infty$, and
 $U(p)$ is continuous and satisfies
$$
 c_1\leq U(p)\leq c_2(1+|p|^M)
$$
 for some $c_1\in\mathbb{R},c_2>0$ and $M>0.$ If the initial condition $g$ of \eqref{eq:momen-equation-no-indep-incre} is continuous and bounded, and $\int_\mathbb{R}|k|^{2M}\mu(dk)<\infty,$ then
\begin{align}\label{eq:Feyn-Kac-formula-on-indep-incre}
u_*(t,p)=\mathbb{E} \left[g(\zeta_t )\exp\left(
 - \int_0^tU(\zeta_s )\,\,ds\right) \,\Big|\,\zeta_0 =p\right]
\end{align}
is a solution to \eqref{eq:momen-equation-no-indep-incre}.
\end{theorem}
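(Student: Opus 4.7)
The plan is to mirror the three-step semigroup strategy of Theorem~\ref{th:Feynman-Kac-for-Levy}, adapted to the pure jump setting: I will introduce the operator $\widetilde{\mathcal{A}} g(p):=\mathcal{A} g(p)-U(p) g(p)$ and the family $\widetilde{T}_t g(p):=\mathbb{E}[g(\zeta_t)\exp(-\int_0^t U(\zeta_s)\,ds)\,|\,\zeta_0=p]$, then show (i)~$\lim_{t\to 0^+} t^{-1}(\widetilde{T}_t g(p)-g(p))=\widetilde{\mathcal{A}} g(p)$ pointwise, (ii)~$\widetilde{T}_t g$ remains bounded and continuous, (iii)~the identity extends to a genuine time derivative of $\widetilde{T}_t g$. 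Throughout I will use that $\zeta$ is compound Poisson with intensity $\mu(\mathbb{R})<\infty$, so $\zeta_t=\sum_{i=1}^{N_t} J_i$ with $J_i$ i.i.d.\ of law $\mu/\mu(\mathbb{R})$; the hypothesis $\int|k|^{2M}\mu(dk)<\infty$ then yields the moment bounds $\sup_{s\leq 1}\mathbb{E}[|\zeta_s|^{2M}\,|\,\zeta_0=p]<\infty$ uniformly on compact sets of $p$, which via $|U(\zeta_s)|\leq c_2(1+|\zeta_s|^M)$ gives $\sup_{s\leq 1}\mathbb{E}[U(\zeta_s)^2\,|\,\zeta_0=p]<\infty$.

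For Step~1 I would reuse the martingale identity \eqref{martingale-approach} to write $\widetilde{T}_t g(p)-g(p)=\int_0^t \widetilde{T}_s\widetilde{\mathcal{A}} g(p)\,ds$ and divide by $t$. The piece $\mathcal{A} g(\zeta_s)$ is uniformly bounded by $2\|g\|_\infty\mu(\mathbb{R})$ and tends to $\mathcal{A} g(p)$ as $s\to 0^+$ by right-continuity of $\zeta$ and continuity of $g$, so standard dominated convergence handles it. The $U(\zeta_s)g(\zeta_s)$ piece is unbounded in $p$, and this is precisely the case where the authors indicated in Theorem~\ref{th:Feynman-Kac-for-Levy} that the alternative argument following \eqref{temp-8-8} is needed; I will split
$$\mathbb{E}[U(\zeta_{t\theta})g(\zeta_{t\theta})e^{-\int_0^{t\theta}U(\zeta_u)du}]=\mathbb{E}[U(\zeta_{t\theta})g(\zeta_{t\theta})]+\mathbb{E}\bigl[U(\zeta_{t\theta})g(\zeta_{t\theta})(e^{-\int_0^{t\theta}U(\zeta_u)du}-1)\bigr],$$
pass to the limit $U(p)g(p)$ in the first term via the $L^2$ moment bound above and right-continuity of paths, and control the second by Cauchy--Schwarz together with $(\mathbb{E}[(e^{-\int_0^{t\theta}U(\zeta_u)du}-1)^2])^{1/2}=o(1)$, a consequence of the uniform domination $e^{-\int_0^s U(\zeta_u)du}\leq e^{-c_1 s}$ (since $U\geq c_1$) and pathwise continuity of the integral at $s=0$.

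Step~2 is lighter than in Theorem~\ref{th:Feynman-Kac-for-Levy} because the Schwartz class is replaced by $C_b(\mathbb{R})$: boundedness is immediate from $|\widetilde{T}_t g(p)|\leq\|g\|_\infty e^{-c_1 t}$, and continuity in $p$ follows from translation invariance, writing $\widetilde{T}_t g(p)=\mathbb{E}[g(p+\eta_t)\exp(-\int_0^t U(p+\eta_s)ds)]$ with $\eta$ the process started at $0$, then applying dominated convergence with the dominating constant $\|g\|_\infty e^{-c_1 t}$ and continuity of $g$ and $U$. Step~3 then applies Step~1 to $\widetilde{T}_t g\in C_b(\mathbb{R})$ (justified by Step~2) and uses the Markov semigroup identity $\widetilde{T}_{t+h}=\widetilde{T}_t\widetilde{T}_h$ to obtain $\partial_t^+\widetilde{T}_t g(p)=\widetilde{\mathcal{A}}\widetilde{T}_t g(p)$; continuity in $t$ of the right-hand side (using dominated convergence for $\mathcal{A}\widetilde{T}_t g(p)$ via the finiteness of $\mu$, and continuity of $U(p)\widetilde{T}_t g(p)$) upgrades this to a two-sided derivative, verifying \eqref{eq:momen-equation-no-indep-incre}.

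The main obstacle is Step~1: the unboundedness of $U$ blocks a naive dominated-convergence argument, so the polynomial bound $U(p)\leq c_2(1+|p|^M)$ must be balanced against the moment bound coming from $\int|k|^{2M}\mu(dk)<\infty$, and the Cauchy--Schwarz splitting is essential to separate the unbounded factor $U(\zeta_{t\theta})g(\zeta_{t\theta})$ from the $o(1)$ factor $e^{-\int_0^{t\theta} U(\zeta_u)du}-1$. Every other step reduces either to a compound-Poisson moment computation or to the bounded-generator theory of Section~4.2 of \cite{Ethier-Thomas-2005}.
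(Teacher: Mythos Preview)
Your proposal is correct and follows essentially the same route as the paper's own proof: the paper simply says the argument is ``quite similar'' to Theorem~\ref{th:Feynman-Kac-for-Levy} with $S(\mathbb{R})$ replaced by bounded continuous functions, invokes the moment bound $\sup_{t\in[0,1]}\mathbb{E}[(\zeta_t)^{2M}]<\infty$ from $\int|k|^{2M}\mu(dk)<\infty$, and notes that Steps~2 and~3 require only the boundedness and continuity in $p$ of $\widetilde{T}_tg$. In particular, your use of the Cauchy--Schwarz splitting for the $U(\zeta_{t\theta})g(\zeta_{t\theta})$ term is exactly the ``alternative proof of \eqref{temp-8-8}'' that the paper flagged in advance as being needed here.
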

\begin{proof} 
The proof is quite similar to that of Theorem \ref{th:Feynman-Kac-for-Levy} by
 replacing $S(\mathbb{R})$
 with the space of bounded measurable functions on $\mathbb{R}$.
 Since $\mu (\real )$ is finite
 the finiteness of the moment
 $\sup_{t\in[0,1]}\mathbb{E} [ (\zeta_t )^{2M} ]$
 follows directly
 from the assumption
 $\int_\mathbb{R}|k|^{2M}\mu(dk)<\infty$
 by e.g. \cite{bassan} or
 Relations~(1.1)-(1.2) in \cite{momentpoi}. In Steps 2 and 3 we need the boundedness and the continuity in $x$ of the new semigroup $$\widetilde{T}_tg(p)=\mathbb{E} \left[g(\zeta_t )\exp\left(- \int_0^tU(\zeta_s )\,\,ds\right) \,\,\Big|\,\, \zeta_0 =p\right]$$
which follow from the boundedness and the continuity of $g(x).$
\end{proof}
There are a number of ways to prove that (\ref{eq:Feyn-Kac-formula-on-indep-incre}) is the unique (bounded) solution to \eqref{eq:momen-equation-no-indep-incre} under additional appropriate assumptions. For instance, if the rate $U$ is bounded, then the Gronwall's lemma implies the uniqueness.

\section{Limiting behaviors}\label{sec:limit-behavior}
 This section is concerned with the limiting behavior of
 solutions and drift terms as $\hbar$ tends to $0$.
 We start in Section~\ref{subsec:an-illustration}
 with an illustration
 of how large deviations (of integral forms) can be applied
 in the case of pure jump processes.
 Detailed formulations are then presented together
 with some close connections with the semiclassical limits discussed in \cite{Kolsrud-Zambrini-1991}.
 Finally we provide
 precise asymptotics for the drift terms in both configuration and momentum representations of Euclidean quantum physics.
\subsection{An illustration with pure jump processes}\label{subsec:an-illustration}
 To illustrate the analysis of
 limiting behaviors of solutions defined through
 \eqref{eq:Feyn-Kac-formula} and \eqref{eq:Feyn-Kac-formula-on-indep-incre}
 we consider the limit as $\hbar$ tends to $0$ of the solution
\begin{align*}
u^{\hbar}(t,p)=\mathbb{E}^{\hbar}\left[\exp\left(-\hbar^{-1}\int_0^tU(\zeta_s^{\hbar})\,\,ds\right) \,\Big|\,\zeta_0^{\hbar}=p\right].
\end{align*}
 to \eqref{eq:momen-equation-no-indep-incre} when $g\equiv1$
 and $( \zeta_t^{\hbar} )_{t\geq0}$ is the
 pure jump processes with infinitesimal generator
$$\mathcal{A}f(p)=\hbar^{-1}\int_{\mathbb{R}} (
 f(p+\hbar k)-f(p) ) \mu(dk).$$
Besides the time symmetry assumption and $\mu(\mathbb{R})<\infty,$ we further impose the bounded support condition
$$\mu([-N,N]^c)=0,\,\,\,\mu([-N,-N+\epsilon])>0,\,\,\,\mu([N-\epsilon,N])>0,\,\,\,\forall \epsilon>0,
$$
 on the measure $\mu$, for some $N>0.$
The trajectories of $( \zeta_s^{\hbar} )_{0\leq s\leq t}$ belong to the space $D( [0,t] )$ of all right continuous functions with left limits (equipped with the uniform norm). It has been shown in \cite{Wentzell-LD-Markov-Processes-1986} (see Theorems 3.2.1, 3.2.2 and 4.1.1 therein) that the family
 $( \zeta_s^{\hbar} )_{0\leq s\leq t}$ satisfies a \textit{large deviation principle} over $D( [0,t])$ with the action functional $S(\phi):=\int_0^tL_0(\phi'(s))ds$ for absolutely continuous function $\phi(t)$ where
$$L_0(u)=\sup_{x\in\mathbb{R}} ( xu-H_0(x) ),
 \quad H_0(x)=\int_\mathbb{R}(e^{xk}-1)\mu(dk).$$
 More precisely, for any measurable set $\Gamma\subseteq D ( [0,t] )$
 we have
\begin{equation}\label{eq:LDP-definition}
-\inf_{\phi\in\Gamma^o}S(\phi)\leq \liminf_{\hbar\to 0}\hbar\ln \mathbb{P}^{\hbar} ( \zeta^\hbar\in \Gamma )
 \leq \limsup_{\hbar\to 0}\hbar\ln \mathbb{P}^{\hbar} (
 \zeta^\hbar\in \Gamma ) \leq -\inf_{\phi\in\bar{\Gamma}}S(\phi)
\end{equation}
where $\Gamma^o$ (resp. $\bar{\Gamma}$) is the interior (resp. closure) of $\Gamma.$
\\

It is proved from \eqref{eq:LDP-definition} through Varadhan's integral lemma (see Section 4.3 \cite{Dembo-Zei}) that
\begin{eqnarray*}
\lefteqn{ 
\lim_{\hbar\to 0}\hbar\ln \mathbb{E}^{\hbar}\left[\exp\left( -\hbar^{-1}\int_0^tU(\zeta_s^{\hbar})\,\,ds\right)\,\Big|\,\zeta_0^{\hbar}=p\right]
} 
\\ 
 &=& \sup_{\phi\in D_0 ([0,t])}\left(
 -\int_0^tU(\phi(s)+p)\,\,ds-S(\phi)\right)
\\
&= & \sup_{\phi\in D_0 ([0,t]) }\left( -\int_0^tL(\phi'(s),\phi(s))ds\right),
\end{eqnarray*}
where $L(\phi'(s),\phi(s))=L_0(\phi'(s))+U(\phi(s)+p)$ and $D_0 ( [0,t] )$ is the subspace of $D ( [0,t] ) $ with initial position $0.$ For general $g$ satisfying suitable boundedness and smoothness conditions, we may also deduce the
 asymptotic expansion
\begin{equation*}
 u^{\hbar}_*(t,p)
 =
 \exp\left( h^{-1}\sup_{\phi\in D_0 ([0,t])}
 \left(
 -\int_0^tL(\phi'(s),\phi(s))\,\,ds\right)
 \right) \left(
 \sum_{0\leq i\leq n}k_i\hbar^{i/2}+o(\hbar^{n/2})\right)
\end{equation*}
 as $\hbar\to 0$,
by means of precise asymptotics for large deviations which have been established in \cite{Yang-Annals-2012+} and Chapter 5 of \cite{Wentzell-LD-Markov-Processes-1986}.
\subsection{Harmonic oscillator Hamiltonian}
 In this section we turn to \eqref{eq:Feyn-Kac-formula}
 and take
$$U(p)=p^2/2, \quad
 b=\nu=0, \quad \mbox{and} \quad \sigma^2=1
$$
 in the characteristic exponent \eqref{levy-characteristic-exponent},
 i.e. $( \xi^{\hbar}_t )_{t\in \real_+} =\sqrt{\hbar}
 (W_t )_{t\in \real_+}$, where $(W_t )_{t\in \real_+}$ is a
 one-dimensional Wiener process on the real line.
\\

 In order to show the connection
 between large deviations and semiclassical
 limit of (Euclidean) quantum mechanics
 we consider
 $\bar{u}^{\hbar}(t,p):=u^{\hbar}_*(1-t,p)$ for $t\in[0,1]$.
 It follows from \eqref{eq:momen-equation} with
 initial $g(p)=\exp \left( -p^2/\hbar \right)$ that the positive function $\bar{u}^{\hbar}(t,x)$ satisfies the final value problem
\begin{align}\label{semi-classical-form}
\begin{cases}
\displaystyle
\hbar \frac{\partial \bar{u}^{\hbar}}{\partial t}(t,p)=-\frac{\hbar^2}{2}\triangle \bar{u}^{\hbar}(t,p)+\frac{1}{2}p^2 \bar{u}^{\hbar}(t,p),& (t,p)\in [0,1)\times \mathbb{R}\\
\bar{u}^{\hbar}(1,p)=\exp \left( -p^2/\hbar \right) ,&p\in\mathbb{R}.
\end{cases}
\end{align}
From Schilder's large deviation formula (see \cite{Schilder-1966}), $\bar{u}^{\hbar}(t,x)$ admits the asymptotic expansion
\begin{align}\label{eq:connection-with-EQM}
\bar{u}^{\hbar}(t,p)=\exp\left( -\frac{1}{\hbar}\left(
 \int_t^1\left(\frac{1}{2} |\bar{q}'(\tau)|^2+\frac{1}{2} |\bar{q}(\tau)+p|^2\right)d\tau+|\bar{q}(1)+p|^2+o(1)\right) \right)
\end{align}
 as $\hbar\to 0$,
 where $\bar{q}(\tau):=\bar{\phi}(\tau-t)$ and $\bar{\phi}$ is a minimizer of
$$\inf_{\phi\in A_0 ( [0,1-t] ) }\left(
 \int_0^{1-t}\left(\frac{1}{2} |\phi'(s)|^2+\frac{1}{2} |\phi(s)+p|^2\right)ds+|\phi(1-t)+p|^2\right)
$$ 
 with $A_0 ( [0,1-t] ) $ being the space of all absolutely continuous functions on $[0,1-t]$ having initial value $0.$ In terms of $\bar{q}$ itself, the variational problem can therefore be rewritten as on $[t,1]:$
$$\inf_{q\in A_0 ( [t,1] ) }\left(
 \int_t^1\left(\frac{1}{2} |q'(s)|^2+\frac{1}{2} |q(s)+p|^2\right)ds+|q(1)+p|^2\right).
$$
If $\bar{\phi}$ is smooth, then it satisfies an ordinary differential equation
$$\bar{\phi}''(s)-(\bar{\phi}(s)+p)=0,\,\,\,s\in(0,1-t),\quad \bar{\phi}(0)=0\text{ and }\bar{\phi}'(1-t)+2(\bar{\phi}(1-t)+p)=0.$$
It is again equivalent but more appropriate for us to rewrite this in terms of $\bar{q}$ as
$$\bar{q}''(s)-(\bar{q}(s)+p)=0,\,\,\,s\in(t,1),\quad \bar{q}(t)=0\text{ and }\bar{q}'(1)+2(\bar{q}(1)+p)=0.$$
We note that \eqref{eq:connection-with-EQM} has a quite similar expression as the semiclassical limit (5.38) in \cite{Kolsrud-Zambrini-1991}. Actually, we can match exactly these two expressions by investigating the exact order $o(1)$ in \eqref{eq:connection-with-EQM}. By applying precise large deviations of \cite{Schilder-1966} to $\sqrt{\hbar}W,$ we get the expansion
\begin{eqnarray} 
\label{eq:connection-with-EQM-precise}
\bar{u}^{\hbar}(t,p) & = & (1+o(1)) \bar{K}(t) 
\\ 
\nonumber 
 & & \times \exp\left(
 -\frac{1}{\hbar}\left(
 \int_t^1\left(\frac{1}{2} |\bar{q}'(\tau)|^2+\frac{1}{2} |\bar{q}(\tau)+p|^2\right)d\tau+|(\bar{q}(1)+p|^2\right)
 \right)
\end{eqnarray} 
 as $\hbar\to 0$,
 where $\bar{K}(t)=\mathbb{E} \left[
 \exp \left(
 -\frac{1}{2} \int_0^{1-t}W_s^2\,\,ds-W_{1-t}^2\right) \right].$
 In order to compare \eqref{eq:connection-with-EQM-precise} with
 (6.8)-(6.9) in \cite{Kolsrud-Zambrini-1991},
 we note that $\bar{K}(t)=(2\pi F(t))^{-1/2}$ where
$$
 F(t)=\frac{1}{2\pi}\left(\cosh(1-t)+2\sinh(1-t)\right)$$
 solving the following ordinary differential equation on $[t,1]$ (cf. (1.9.3)
 page~168 in \cite{Borodin-Salminen-2002})
$$F''(\tau)=F(\tau),\quad
 \tau \in [t,1], \quad F(1)=1/(2\pi),\quad F'(1)=-1/\pi.$$
In particular, these final boundary conditions satisfy the condition (6.9) of \cite{Kolsrud-Zambrini-1991}: $|F'(1)|+|F(1)|>0.$ This means that the associated (forward) semiclassical Bernstein diffusion $Z$ of \cite{Kolsrud-Zambrini-1991} is absolutely continuous with respect to our $\xi^\hbar=\sqrt{\hbar}W$ and with linear drift $B(z,\tau)=F'(\tau)F^{-1}(\tau)z$ (the constant $\delta,$ there, is zero). The semiclassical analysis done here for \eqref{semi-classical-form} is valid for Gaussian final boundary condition $\bar{u}^{\hbar}(1,p).$

Equation \eqref{semi-classical-form} is the forward description (involving the usual increasing filtration representing the past information about the system) of the semiclassical expansion, and we can similarly consider the following backward description
\begin{align}\label{semi-classical-form-prime}
\begin{cases}
\displaystyle
\hbar \frac{\partial u^{\hbar}_*}{\partial t}(t,p)=\frac{\hbar^2}{2}\triangle u^{\hbar}_*(t,p)-\frac{1}{2}p^2 u^{\hbar}_*(t,p),& (t,p)\in (0,1]\times \mathbb{R}
\\
\displaystyle
u^{\hbar}(0,p)=\exp \left( -p^2/\hbar \right),&p\in\mathbb{R}
\end{cases}
\end{align}
associated with a decreasing filtration. For simplicity, the same Gaussian boundary condition has been chosen. In this case, the solution $u^{\hbar}_*(t,p)$ admits the expansion
\begin{eqnarray}\label{eq:connection-with-EQM-precise-prime}
\lefteqn{
u^{\hbar}_*(t,p)=K_*(t)\cdot(1+o(1))}
\\
\nonumber
 & & \times \exp\left(
 -\frac{1}{\hbar}\left( 
 \int_0^t\left(\frac{1}{2} |q_*'(\tau)|^2+\frac{1}{2} |q_*(\tau)+p|^2\right)d\tau+|q_*(t)+p|^2\right) 
 \right)
\end{eqnarray}
where $q_*$ is the minimizer of
$$
\inf_{q\in A_0 ( [0,t] ) }\left( 
\int_0^t\left(\frac{1}{2} |q'(s)|^2+\frac{1}{2} |q(s)+p|^2\right)ds+|q(t)+p|^2\right), 
$$
 and the coefficient $K_*(t)=\mathbb{E} \left[
 \exp \left( -\frac{1}{2} \int_0^tW_s^2\,\,ds-W_t^2\right)\right].$
 Then we have $K_*(t)=(2\pi F_*(t))^{-1/2},$ where $F_*$ solves the same ODE as $F$ before but with initial boundary conditions on $[0,t]: F_*(0)=1/(2\pi)$ and $F_*'(0)=1/\pi.$ Then $F_*(\tau)=\frac{1}{2\pi}(\cosh\tau+2\sinh\tau).$ The same semiclassical diffusion $Z$ as before, now considered with respect to a decreasing, or backward, filtration has a drift $B_*(z,\tau)=F_*'(\tau)F_*^{-1}(\tau)z.$ Again, the whole manifold of semiclassical diffusions follow from the analysis of general Gaussian initial condition (cf. \cite{Kolsrud-Zambrini-1991}). The special Gaussian $Z$ defined via \eqref{eq:connection-with-EQM-precise} and \eqref{eq:connection-with-EQM-precise-prime} is of zero mean and covariance $c(\tau)$ proportional to $F_*(\tau)F(\tau)$, $0\leq \tau\leq1.$
\subsection{Pure jump Hamiltonian}
 The operator $H=- \hbar^2 \triangle / 2 +p^2/2$ of equation \eqref{semi-classical-form} is called \textit{Harmonic oscillator} Hamiltonian expressed here, in this special case where configuration and momentum play a completely symmetric role, in terms of the momentum variable. We can generalize \eqref{semi-classical-form} to include more general Hamiltonians and derive precise large deviations as \eqref{eq:connection-with-EQM-precise} having similar connections with the semiclassical limits of (Euclidean) quantum physics, see \cite{Kolsrud-Zambrini-1991}, \cite{Wentzell-LD-Markov-Processes-1986} and \cite{Yang-Annals-2012+}. In particular, we illustrate below an example with pure jump processes as mentioned in Section \ref{subsec:an-illustration}. Consider the following partial integro-differential equation with final condition
\begin{align}\label{semi-classical-form-jump}
\begin{cases}
\displaystyle
\hbar \frac{\partial \bar{v}^{\hbar}}{\partial t}(t,p)=-\int_{\mathbb{R}} 
 ( \bar{v}^{\hbar}(t,p+\hbar k)-\bar{v}^{\hbar}(t,p)) 
 \mu(dk)+U(p) \bar{v}^{\hbar}(t,p),& (t,p)\in [0,1)\times \mathbb{R}\\
\bar{v}^{\hbar}(1,p)=\exp \left( -1/\hbar \right), \quad p\in\mathbb{R}. &
\end{cases}
\end{align}
If we consider a special potential $V(x)=1-\cos(\alpha x)$ for some $\alpha>0,$ this case corresponds to $\mu(dk)=\frac{1}{2}( \delta_\alpha(dk)+\delta_{-\alpha}(dk)).$ 
 For a particular $U(p)=p^2-p,$ it has been proved in \cite{Yang-Annals-2012+} that $\bar{v}^{\hbar}$ has the asymptotics
\begin{align}\label{eq:connection-with-EQM-precise-jump}
\bar{v}^{\hbar}(t,p)=\bar{P}(t)\cdot(1+o(1))\cdot\exp\left( -\frac{1}{\hbar} 
 \left( \int_t^1L(\bar{z}'(s),\bar{z}(s))ds+1\right)  
\right)
\end{align}
 as $\hbar\to 0$,
 where $L$ is defined in Section \ref{subsec:an-illustration} with
\begin{align}\label{S-end}
S(z)=\int_t^{1}\left( (z'(s)/\alpha)\ln \left(z'(s)/\alpha+\sqrt{\left(z'(s)/\alpha\right)^2+1}\right)+1-\sqrt{\left(z'(s)/\alpha\right)^2+1}\right)ds
\end{align}
 for absolutely continuous $z(s),$ the function $\bar{z}(\tau)$ is the unique minimizer of
$$\int_t^{1}L(z'(s),z(s))ds$$
 over $A_0 ( [t,1]),$ and $\bar{P}(t)=\mathbb{E}\left[
 \exp \left( -\int_t^1\bar{\varsigma}_s^2\,\,ds \right)\right]$
 with
$$\bar{\varsigma}_s = \alpha
 \sqrt{\frac{e^{\bar{\varrho}(s-t)}+e^{-\bar{\varrho}(s-t)} }{2} } W_{s-t}$$ and
$$\bar{\varrho}(\tau)=\ln \left(\bar{z}'(\tau)/\alpha+\sqrt{(\bar{z}'(\tau)/\alpha)^2+1}\right).$$
We further remark that the unique minimizer $\bar{z}$ satisfies the following ordinary differential equation
$$\bar{z}''(s)- ( 2(\bar{z}(s)+p)-1) \cdot\sqrt{|\bar{z}'(s)|^2+\alpha^2}=0,\quad s\in(t,1),\quad \bar{z}(t)=0\text{ and }\bar{z}'(1)=0.$$
Again \eqref{eq:connection-with-EQM-precise-jump} is the forward description of the semiclassical expansion. Let us now consider the backward description by using
\begin{align}\label{semi-classical-form-jump-backward}
\begin{cases}
 \displaystyle
 \hbar \frac{\partial v_{*}^{\hbar}}{\partial t}(t,p)=\int_{\mathbb{R}} 
 ( v_{*}^{\hbar}(t,p+\hbar k)-v_{*}^{\hbar}(t,p) ) 
 \mu(dk)-U(p) v_{*}^{\hbar}(t,p),& (t,p)\in (0,1]\times \mathbb{R}\\
\displaystyle
v_{*}^{\hbar}(0,p)=\exp \left( -1/\hbar \right),&p\in\mathbb{R}.
\end{cases}
\end{align}
In this case, we have
\begin{align}\label{eq:connection-with-EQM-precise-jump-forward}
v_{*}^{\hbar}(t,p)=P_*(t)\cdot(1+o(1))\cdot\exp\left( -\frac{1}{\hbar}\left( 
 \int_0^tL(z_*'(s),z_*(s))ds+1\right) 
 \right)
\end{align}
 as $\hbar\to 0$,
 where $z_*$ is the unique minimizer of $\int_0^tL(z'(s),z(s))ds$ over
 $A_0 ( [0,t] )$ (of course the action functional $S$ defined by \eqref{S-end} now has the integral interval $[0,t]$), and
$$P_*(t)=\mathbb{E} \left[
 \exp \left( -\int_0^{t}\varsigma_s^2\,\,ds \right) \right]$$
 with
$\varsigma_s= \alpha \sqrt{ (e^{\varrho_*(s)}+e^{-\varrho_*(s)})/2}W_{s}$ and
$$\varrho_*(\tau)=\ln \left(z_*'(\tau)/\alpha+\sqrt{|z_*'(\tau)/\alpha |^2+1}\right).$$

\subsection{Precise asymptotics for drift terms}
In \cite{chungzambrini}, Bernstein processes were studied as stochastic deformation related to Feynman path integral. These processes were first introduced in \cite{bernstein}, and now are also called \textit{local Markov} processes, \textit{two-sided} Markov processes or \textit{reciprocal} process in literature, see \cite{jamison}. In configuration representations, basically they are diffusion processes over a finite time interval with prescribed initial and final distributions. In momentum respresentions, the drift parts (terms) can be expressed as $\hbar\nabla u^{\hbar}_*(t,p)/u^{\hbar}_*(t,p)$ with $u^{\hbar}_*(t,p)$ defined in \eqref{eq:momen-equation}. In the free case (that is, the potential is identically zero), the drift has an explicit form which is independent of $\hbar$ (cf. Section 5.1 in \cite{chungzambrini}). Our goal in this section is to derive precise asymptotics as $\hbar\to 0$ for the drift terms in both configuration and momentum representations. In order to have explicit comparisons with the results in \cite{chungzambrini}, here we adopt some new notations which are slightly different from those in previous sections. For instance, the solution $u^{\hbar}_*(t,p)$ is now denoted as $\eta^{\hbar}_*(t,q)$ or $\tilde{\eta}^{\hbar}_*(t,p),$ and the potential is still denoted by $V$ (with different expressions) which is fully explained each time when it is used.

\subsubsection{Configuration representation Hamiltonian}
 We consider a Hamiltonian
 $H=- \hbar^2 \triangle / 2+V$ in configuration representation,
 where $V$ is a potential in Kato's class.
 The backward description of the semiclassical expansion is
\begin{align}\label{eq:PDE-end}
\begin{cases}
\displaystyle
\hbar \frac{\partial \eta^{\hbar}_*}{\partial t}(t,q)=\frac{\hbar^2}{2}\triangle \eta^{\hbar}_*(t,q)-V(q)\eta^{\hbar}_*(t,q),& (t,q)\in (0,\infty)\times \mathbb{R}\\
\eta^{\hbar}_*(0,q)=\frac{1}{\sqrt{2\pi \hbar}}e^{-q^2/(2\hbar)}, \quad
 q\in\mathbb{R}. &
\end{cases}
\end{align}
We aim at finding precise asymptotics for $\hbar\nabla\eta^{\hbar}_*(t,q)/\eta^{\hbar}_*(t,q)$ as $\hbar\to 0.$ To this end, we introduce the space $C_0 ( [0,t] )$ which consists of continuous functions with zero initial value, $C_0^1 ( [0,t] )$ whose elements are in $C_0 ( [0,t] )$ and continuously differentiable, and three functionals
\begin{align*}
& F(\phi)=- \left( \int_0^tV(\phi(s)+q)ds+\frac{|\phi(t)+q|^2}{2}\right),
\\
& G(\phi)=\int_0^tV'(\phi(s)+q)ds+(\phi(t)+q),
\\
& S(\phi)=\frac{1}{2}\int_0^t\phi'(s)^2ds,
\end{align*}
 for absolutely continuous $\phi \in C_0 ( [0,t] )$,
 and $S(\phi)=\infty$ otherwise. Now we state the main result of this section.
\begin{prop}\label{th:configuration}
 Assume that
\begin{description} 
\item{(A.1)}
 the potential $V(\cdot):\mathbb{R}\rightarrow\mathbb{R}$
 is bounded below and has up to fourth order continuous derivatives with $V''(q)\geq0$, and
\item{(A.2)} 
 the supremum of $F-S$ over $C_0 ( [0,t] )$
 is reached uniquely at $\phi_*\in C_0^1( [0,t] )$.
\end{description} 
 Then we have the equivalence
\begin{align*}
\hbar \frac{\nabla\eta^{\hbar}_*(t,q)}{\eta^{\hbar}_*(t,q)}\sim -\left(
 G(\phi_*)+\frac{\bar{K}_1^*}{K_0^*}\hbar+o(\hbar)\right),
\end{align*}
 as $\hbar\to 0,$ where
\begin{align*}
& K_0^*=\mathbb{E} \left[
 \exp\left(\frac{1}{2}F''(\phi_*)(W,W)\right) \right]
\\
& \bar{K}_0^*=G(\phi_*)\cdot K_0^*\\
& \bar{K}_1^*=\mathbb{E}\left[ \exp \left(\frac{1}{2}F''(\phi_*)(W,W)\right) \left( \frac{1}{2}G''(\phi_*)(W,W)+\frac{1}{6}G'(\phi_*)(W)F^{(3)}(\phi_*)(W^{\otimes^3})\right.\right.\\
&\qquad\qquad\qquad\qquad\quad\left.\left.+\frac{1}{24}G(\phi_*)F^{(4)}(\phi_*)(W^{\otimes^4})+\frac{1}{72}G(\phi_*)\left(F^{(3)}(\phi_*)(W^{\otimes^3})\right)^2\right) \right].
\end{align*}
\end{prop}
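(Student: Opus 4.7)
The plan is to combine the Feynman-Kac representation from Theorem~\ref{th:Feynman-Kac-for-Levy} with a Laplace-type asymptotic expansion on Wiener space centred at the minimizer $\phi_*$ provided by assumption (A.2). First, specializing the parameters to $b=0$, $\sigma^2=\hbar$, $\nu=0$ so that $\xi_s^\hbar=\sqrt{\hbar}W_s$, and folding the Gaussian initial datum of \eqref{eq:PDE-end} into an additional boundary term inside the exponent, the Feynman-Kac formula reads
\begin{equation*}
\eta_*^\hbar(t,q)=\frac{1}{\sqrt{2\pi\hbar}}\,\mathbb{E}\!\left[\exp\!\left(\tfrac{1}{\hbar}F(\sqrt{\hbar}W)\right)\right].
\end{equation*}
Since the $q$-dependence of $F$ is explicit, pointwise differentiation gives $\nabla_q F(\phi)=-G(\phi)$, and differentiating under the expectation yields the ratio representation
\begin{equation*}
\hbar\,\frac{\nabla\eta_*^\hbar(t,q)}{\eta_*^\hbar(t,q)}=-\frac{\mathbb{E}\!\left[G(\sqrt{\hbar}W)\,e^{F(\sqrt{\hbar}W)/\hbar}\right]}{\mathbb{E}\!\left[e^{F(\sqrt{\hbar}W)/\hbar}\right]}.
\end{equation*}
The task thus reduces to producing matched two-term expansions of the numerator and the denominator.

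Second, I would carry out a Cameron-Martin shift of $W$ by $\phi_*/\sqrt{\hbar}$ and Taylor-expand $F(\phi_*+\sqrt{\hbar}W)$ and $G(\phi_*+\sqrt{\hbar}W)$ in powers of $\sqrt{\hbar}$. The singular term $\hbar^{-1/2}F'(\phi_*)(W)$ cancels against the Girsanov stochastic integral $\hbar^{-1/2}\int_0^t\phi_*'(s)\,dW_s$, because the Euler-Lagrange equation attached to (A.2), namely $\phi_*''=V'(\phi_*+q)$ with $\phi_*(0)=0$ and $\phi_*'(t)=-(\phi_*(t)+q)$, yields by integration by parts $F'(\phi_*)(W)=\int_0^t\phi_*'(s)\,dW_s$. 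The remaining exponent is $(F(\phi_*)-S(\phi_*))/\hbar+\tfrac12 F''(\phi_*)(W,W)+\tfrac{\sqrt{\hbar}}{6}F^{(3)}(\phi_*)(W^{\otimes 3})+\tfrac{\hbar}{24}F^{(4)}(\phi_*)(W^{\otimes 4})+o(\hbar)$. Exponentiating the $\hbar$-dependent tail, multiplying by the amplitude $G(\phi_*+\sqrt{\hbar}W)=G(\phi_*)+\sqrt{\hbar}G'(\phi_*)(W)+\tfrac{\hbar}{2}G''(\phi_*)(W,W)+o(\hbar)$ in the numerator (or by $1$ in the denominator), taking expectations, and using the invariance of the Gaussian-type weight $\exp(\tfrac12 F''(\phi_*)(W,W))$ under $W\mapsto -W$ to annihilate every odd-order Wiener chaos, one reads off the coefficients $K_0^*$, $\bar{K}_0^*=G(\phi_*)K_0^*$ and $\bar{K}_1^*$ exactly as defined in the statement. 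Dividing the two expansions and keeping terms up to order $\hbar$ produces the claimed equivalence.

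The main obstacle is to make the formal expansion rigorous. Outside a small $C_0$-neighborhood of $\phi_*$, Schilder-type large deviations (as invoked in Section~\ref{subsec:an-illustration}) together with the uniqueness in (A.2) bound the corresponding contribution by $e^{-(S(\phi_*)+\delta)/\hbar}$, negligible against the prefactor $e^{(F(\phi_*)-S(\phi_*))/\hbar}$. Inside the neighborhood, the Taylor remainders of $F$ and $G$ at order four must be controlled in Wiener $L^p$-norm; the four continuous derivatives of $V$ in (A.1), combined with the lower-boundedness of $V$ and standard Brownian moment estimates, provide the required uniform bounds on $V^{(k)}(\phi_*(s)+q+\sqrt{\hbar}W_s)$. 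Finally, the convexity condition $V''\geq 0$ renders the quadratic form $F''(\phi_*)(h,h)=-\int_0^t V''(\phi_*(s)+q)h(s)^2\,ds-h(t)^2$ nonpositive, so that $e^{\frac12 F''(\phi_*)(W,W)}\leq 1$ guarantees the finiteness of $K_0^*$ and of the polynomial moments entering $\bar{K}_1^*$; the same condition ensures the second variation of $F-S$ at $\phi_*$ is strictly negative, certifying $\phi_*$ as a non-degenerate maximum that legitimates the Laplace method.
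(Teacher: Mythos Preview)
Your proposal is correct and follows essentially the same strategy as the paper: write $\eta_*^\hbar$ and $\nabla\eta_*^\hbar$ via the Feynman--Kac formula, obtain the ratio representation, expand numerator and denominator separately, and divide. The only difference is one of packaging: the paper simply invokes the precise large-deviation expansions of \cite{Wentzell-LD-Markov-Processes-1986} and \cite{Yang-Annals-2012+} as black boxes to obtain the coefficients $K_0^*$, $\bar K_0^*$, $\bar K_1^*$, whereas you sketch the underlying Cameron--Martin shift and Taylor/parity mechanism that those references contain; both routes land on the same expansions and the same final division.
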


\begin{remark}
 On the interval $[0,t],$ the unique maximizer $\phi_*$ satisfies the
 ordinary differential equation
$$\phi_*''(\tau)-V'(\phi_*(\tau)+q)=0, \text{ with }\phi_*(0)=0, \phi_*'(t)=-(\phi_*(t)+q),
$$
 with in particular $G(\phi_*) =-\phi_*'(0).$
 In order to express the dependence of $G(\phi_*)$ on $q$ and the time $t$
 we can use
 the transform $\Phi_*(\tau)=\phi_*(t-\tau)+\tau\cdot q$
 and in this case
$$\Phi_*''(\tau)-V'(\Phi_*(\tau)-\tau\cdot q+q)=0, \text{ with }\Phi_*(t)=qt, \Phi_*'(0)=\Phi_*(0)+q$$
from which $G(\phi_*)=-\phi_*'(0)=\Phi_*'(t)-q.$
\end{remark}
\begin{remark}
In the formulation of Proposition \ref{th:configuration}, all functional derivatives are in the sense of Fr\'{e}chet derivative. For instance,
$$
 F'(\phi_*)(W)=-\left(
 \int_0^tV'(\phi_*(s)+q)W_sds+(\phi_*(t)+q)W_t\right)
$$
 and $F''(\phi_*)(W,W)=-\left(
 \int_0^tV''(\phi_*(s)+q)W_s^2ds+W_t^2\right)$.
\end{remark}

\begin{proof}[Proof of Proposition \ref{th:configuration}]

The solution to \eqref{eq:PDE-end} can be written by a Feynman-Kac formula as
\begin{align}\label{eq:Feyn-Kac-formula-end}
\eta^{\hbar}_*(t,q)=\frac{1}{\sqrt{2\pi \hbar}}\mathbb{E}
 \left[
 \exp\left( \frac{1}{\hbar}F(\sqrt{\hbar}W)\right) \right],
\end{align}
where the functional $F$ is defined in Proposition \ref{th:configuration}. Then the derivative of $\eta^{\hbar}_*(t,q)$ with respect to $q$ is written as
\begin{align}\label{eq:derivative-Feyn-Kac-formula-end}
\nabla\eta^{\hbar}_*(t,q)=-\frac{1}{\hbar}\cdot\frac{1}{\sqrt{2\pi \hbar}}\mathbb{E}\left[G(\sqrt{\hbar}W)\cdot\exp \left(
 \frac{1}{\hbar}F(\sqrt{\hbar}W)\right) \right]
\end{align}
with the functional $G$ as above. From precise asymptotics for large deviations of integral forms (cf. \cite{Wentzell-LD-Markov-Processes-1986} and \cite{Yang-Annals-2012+}), expansions for \eqref{eq:Feyn-Kac-formula-end} and \eqref{eq:derivative-Feyn-Kac-formula-end} can be proved as follows, respectively,
\begin{equation}\label{expansion-Feyn-Kac-formula-end}
\begin{aligned}
\eta^{\hbar}_*(t,q)&=\frac{1}{\sqrt{2\pi \hbar}}\mathbb{E}
 \left[
 \exp\left(
 \frac{1}{\hbar}F(\sqrt{\hbar}W)\right) \right]
\\
&=\frac{1}{\sqrt{2\pi \hbar}}\exp\left(
\frac{1}{\hbar}\left(
 F(\phi_*)-S(\phi_*)\right)
 \right)
 \cdot\left( K_0^*+o(1)\right)
\end{aligned}
\end{equation}
 and
\begin{equation}\label{expansion-deriv-Feyn-Kac-formula-end}
\begin{aligned}
\nabla\eta^{\hbar}_*(t,q)&=-\frac{1}{\hbar}\cdot\frac{1}{\sqrt{2\pi \hbar}}\mathbb{E}\left[G(\sqrt{\hbar}W)\cdot\exp\left(
 \frac{1}{\hbar}F(\sqrt{\hbar}W)\right)
 \right]\\
&=-\frac{1}{\hbar}\cdot\frac{1}{\sqrt{2\pi \hbar}}\exp\left(
 \frac{1}{\hbar}\left(
 F(\phi_*)-S(\phi_*)\right)
 \right)
 \cdot\left( 
 \bar{K}_0^*+\bar{K}_1^*\hbar+o(\hbar)\right),
\end{aligned}
\end{equation}
 as $\hbar\to 0$.
 Combining \eqref{expansion-Feyn-Kac-formula-end} and \eqref{expansion-deriv-Feyn-Kac-formula-end}, it follows that
\begin{align*}
 \hbar \frac{\nabla\eta^{\hbar}_*(t,q)}{\eta^{\hbar}_*(t,q)}&\sim\frac{-
 \exp\left( \left(
 F(\phi_*)-S(\phi_*)\right) / \hbar
 \right)
 \cdot\left( 
 \bar{K}_0^*+\bar{K}_1^*\hbar+o(\hbar)\right) 
 }{\exp\left(
 \frac{1}{\hbar}\left( 
 F(\phi_*)-S(\phi_*)\right) 
 \right)
 \cdot\left( K_0^*+o(1)\right) }\\
&\sim -\left(
 G(\phi_*)+\frac{\bar{K}_1^*}{K_0^*}\hbar+o(\hbar)\right),
\end{align*}
 as $\hbar\to 0$.
\end{proof}
\subsubsection{Momentum representation Hamiltonian}
 Consider the complex-valued potential
$$
 V(q)=ibq+\frac{\sigma^2q^2}{2}+\int_{\mathbb{R}\setminus\{0\}}\left(
 e^{-iqk}-1+iqk {\bf 1}_{\{|k|\leq1\}}\right)
 \nu(dk)
$$
 which yields the pseudo-differential operator
\begin{align*}
V(i\hbar \nabla)u(p)=-\hbar b u'(p)-\hbar^2 \frac{\sigma^2}{2}u''(p)-\int_{\mathbb{R}\setminus\{0\}}\left(
 u(p+\hbar k)-u(p)\right)
\nu(dk)
\end{align*}
 where the L\'{e}vy measure $\nu(dk)$ is assumed to be
 symmetric under the time reversal $k\to -k$.
\\

 The resulting Hamiltonian $\hat{H}= p^2/2 +V(i\hbar \nabla)$ is the momentum representation of
 $H=- \hbar^2 \triangle / 2 +V.$ Then the backward description is the following partial integral-differential equation,
\begin{align}\label{eq:momen-equation-end}
\begin{cases}
\displaystyle
\hbar \frac{\partial \tilde{\eta}^{\hbar}_*}{\partial t}(t,p)=-\frac{1}{2}p^2 \tilde{\eta}^{\hbar}_*(t,p)-V(i\hbar \nabla)\tilde{\eta}^{\hbar}_*(t,p),& (t,p)\in (0,\infty)\times \mathbb{R}\\
\tilde{\eta}^{\hbar}_*(0,p)=\frac{1}{\sqrt{2\pi \hbar}}e^{-p^2/(2\hbar)},
 \quad p\in\mathbb{R}. &
\end{cases}
\end{align}
Because of the jumps, in this section we consider the space $D_0 ([0,t]).$ In order to obtain the precise asymptotics for $\hbar\nabla\tilde{\eta}^{\hbar}_*(t,q)/\tilde{\eta}^{\hbar}_*(t,q)$ as $\hbar\to 0,$ we define three functionals over $D_0 ([0,t]):$
\begin{align*}
& \tilde{F}(\phi)= - \frac{1}{2}
 \int_0^t |\phi(s)+p|^2 ds
 - \frac{1}{2} |\phi(t)+p|^2,
\\
& \tilde{G}(\phi) =
 \phi(t)+p + \int_0^t(\phi(s)+p)ds,
\\
& \tilde{S}(\phi)=\int_0^tL_0(\phi'(s))ds,
\end{align*}
 for absolutely continuous $\phi,$ where $L_0(u)=\sup_{x\in \mathbb{R}} 
 ( xu-H_0(x) )$ and
$$
 H_0(x)=bx+\frac{\sigma^2x^2}{2}+\int_{\mathbb{R}\setminus\{0\}} (e^{xk}-1)\nu(dk),
$$
 and $S(\phi)=\infty$ otherwise.
\\

The operator $V(i\hbar \nabla)$ is associated with a family of L\'{e}vy processes which satisfies a large deviation principle under
 suitable assumptions on $H_0$,
 cf. Conditions~(A)-(E) in
 Sections 3.1 and 3.2 of \cite{Wentzell-LD-Markov-Processes-1986},
 which are assumed to hold in the
 following proposition.
\begin{prop}\label{th:momentum}
If the supremum of $\tilde{F}-\tilde{S}$ over $D_0 ([0,t])$ is reached uniquely at $\tilde{\phi}_*\in C_0^1 ( [0,t] ),$ then following precise asymptotics holds:
\begin{align*}
\hbar \frac{\nabla\tilde{\eta}^{\hbar}_*(t,p)}{\tilde{\eta}^{\hbar}_*(t,p)}\sim
 - \tilde{G}(\tilde{\phi}_*) + o(\hbar),
\end{align*}
 as $\hbar\to 0$.
\end{prop}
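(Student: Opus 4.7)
The plan is to mirror the proof of Proposition \ref{th:configuration}, now with the scaled L\'{e}vy process $\xi^{\hbar}$ (the one whose infinitesimal generator is $\hbar^{-1} V(i\hbar \nabla)$) playing the role that the rescaled Wiener path $\sqrt{\hbar}W$ played in the configuration representation. First I apply Theorem \ref{th:Feynman-Kac-for-Levy} to equation \eqref{eq:momen-equation-end} with rate $U(p)=p^2/2$ and Gaussian initial datum $(2\pi\hbar)^{-1/2} e^{-p^2/(2\hbar)}$; after translating the process from $p$ to the origin, this gives
$$\tilde{\eta}^{\hbar}_*(t,p) = \frac{1}{\sqrt{2\pi\hbar}}\,\mathbb{E}\left[\exp\left(\frac{1}{\hbar}\tilde{F}(\xi^{\hbar})\right)\right],$$
where the running-cost part $-\frac{1}{2}\int_0^t|\phi(s)+p|^2\,ds$ of $\tilde{F}$ comes from $U$, and the boundary term $-\frac{1}{2}|\phi(t)+p|^2$ from the Gaussian initial condition.

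Next, differentiating in $p$ under the expectation (legitimated by the Gaussian tail) and using the direct computation $\partial_p \tilde{F}(\phi) = -\tilde{G}(\phi)$, I obtain
$$\nabla\tilde{\eta}^{\hbar}_*(t,p) = -\frac{1}{\hbar\sqrt{2\pi\hbar}}\,\mathbb{E}\left[\tilde{G}(\xi^{\hbar})\exp\left(\frac{1}{\hbar}\tilde{F}(\xi^{\hbar})\right)\right].$$
Then I invoke the precise asymptotics for large deviations of integral type for families of L\'{e}vy processes on the Skorokhod space $D_0([0,t])$ developed in Chapter 5 of \cite{Wentzell-LD-Markov-Processes-1986} and \cite{Yang-Annals-2012+}, which apply under the hypotheses (A)-(E) on $H_0$ assumed before the proposition. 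The associated LDP has action $\tilde{S}$, the supremum of $\tilde{F}-\tilde{S}$ is attained uniquely at $\tilde{\phi}_*\in C_0^1([0,t])$, and a Laplace-type expansion around $\tilde{\phi}_*$ yields
$$\tilde{\eta}^{\hbar}_*(t,p) = \frac{1}{\sqrt{2\pi\hbar}}\,e^{\hbar^{-1}(\tilde{F}(\tilde{\phi}_*)-\tilde{S}(\tilde{\phi}_*))}\bigl(\tilde{K}_0 + o(1)\bigr),$$
and, with the additional smooth factor $\tilde{G}$,
$$-\hbar\,\nabla\tilde{\eta}^{\hbar}_*(t,p) = \frac{1}{\sqrt{2\pi\hbar}}\,e^{\hbar^{-1}(\tilde{F}(\tilde{\phi}_*)-\tilde{S}(\tilde{\phi}_*))}\bigl(\tilde{G}(\tilde{\phi}_*)\tilde{K}_0 + o(1)\bigr).$$
Dividing these two relations cancels both the $(2\pi\hbar)^{-1/2}$ and the exponential prefactor and produces the claimed equivalence, provided the Laplace expansion is pushed one step further so that the $o(1)$ in the quotient can be upgraded to $o(\hbar)$.

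The main obstacle is carrying out this refined Laplace expansion rigorously in the Skorokhod topology rather than on the much friendlier Wiener space. Concretely, one must verify the continuity and tightness bounds needed to localize the expectation near $\tilde{\phi}_*$, Taylor-expand the rate functional $\tilde{S}$ via its convex conjugate $L_0$ (i.e.\ the Legendre transform of $H_0$) to second order at $\tilde{\phi}_*$, and combine with the Gaussian-like fluctuation analysis of \cite{Yang-Annals-2012+} to check that the subleading term in the ratio is genuinely $o(\hbar)$ and not merely $o(1)$. In the Wiener situation of Proposition \ref{th:configuration} the same scheme delivers an explicit $\hbar$-coefficient $\bar{K}_1^*/K_0^*$; here the jump structure of $\xi^{\hbar}$ and the more involved rate functional prevent a comparably clean identification, which is why only the leading term $-\tilde{G}(\tilde{\phi}_*)$ is isolated.
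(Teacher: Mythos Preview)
Your outline matches the paper's proof almost step for step: Feynman--Kac representation via $\xi^{\hbar}$, differentiation in $p$ to bring down the factor $\tilde{G}$, and then the precise Laplace asymptotics from \cite{Wentzell-LD-Markov-Processes-1986} and \cite{Yang-Annals-2012+} applied separately to numerator and denominator before taking the ratio. So the strategy is correct.

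Where you leave a gap is exactly in the last paragraph. You say the jump structure ``prevents a comparably clean identification'' of the $\hbar$-coefficient, and you treat the upgrade from $o(1)$ to $o(\hbar)$ in the quotient as an unresolved obstacle. In fact the Wentzell/Yang machinery already handles this: the fluctuation process appearing in the next-order coefficient is a \emph{diffusion} $\zeta$ (with diffusion coefficient $\sigma^2+\int e^{z_*(t)k}k^2\,\nu(dk)$, $z_*(t)=L_0'(\tilde{\phi}_*'(t))$), so the formula for $\bar{K}_1$ has exactly the same shape as $\bar{K}_1^*$ in Proposition~\ref{th:configuration}, namely an expectation involving $\tilde{G}''(\tilde{\phi}_*)$, $\tilde{F}^{(3)}(\tilde{\phi}_*)$ and $\tilde{F}^{(4)}(\tilde{\phi}_*)$. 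The point you missed is purely algebraic: here $\tilde{F}(\phi)=-\tfrac{1}{2}\int_0^t|\phi(s)+p|^2\,ds-\tfrac{1}{2}|\phi(t)+p|^2$ is quadratic and $\tilde{G}(\phi)=\int_0^t(\phi(s)+p)\,ds+(\phi(t)+p)$ is affine in $\phi$, so $\tilde{F}^{(3)}=\tilde{F}^{(4)}=0$ and $\tilde{G}''=0$. Every term in $\bar{K}_1$ therefore vanishes, giving $\bar{K}_1=0$ and hence the $o(\hbar)$ remainder immediately. This is also why the statement of Proposition~\ref{th:momentum} carries no $\hbar$-coefficient, in contrast to Proposition~\ref{th:configuration} where $F$ depends on a general potential $V$ with nontrivial higher derivatives.
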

\begin{remark}
The unique maximizer $\tilde{\phi}_*$ satisfies an ordinary differential equation on $[0,t]:$
$$\frac{d}{d\tau}\left(L_0'(u)|_{u=\tilde{\phi}_*'(\tau)}\right)=\tilde{\phi}_*(\tau)+p, \text{ with } \tilde{\phi}_*(0)=0, L_0'(u)|_{u=\tilde{\phi}_*'(t)}=-(\tilde{\phi}_*(t)+p),$$
from which it follows $\tilde{G}(\tilde{\phi}_*)=-L_0'(u)|_{u=\tilde{\phi}_*'(0)}.$
\end{remark}

\begin{proof}[Proof of Proposition \ref{th:momentum}]
The solution to \eqref{eq:momen-equation-end} is written as a Feynman-Kac formula
\begin{align}\label{eq:Feyn-Kac-formula-momen-end}
\tilde{\eta}^{\hbar}_*(t,p)=\frac{1}{\sqrt{2\pi \hbar}}\mathbb{E}^\hbar
 \left[ \exp\left(
 \frac{1}{\hbar}\tilde{F}(\xi^{\hbar})\right) \right],
\end{align}
 where $( \xi^{\hbar}_t )_{t\in \real_+}$
 defined on $(\Omega,\mathcal{F},\mathbb{P}^{\hbar})$ is a real-valued L\'{e}vy process for each fixed $\hbar$ whose characteristic function is given by the L\'{e}vy-Khintchine formula
\begin{align*}
\mathbb{E}^{\hbar}\left[\exp\left(
 -\frac{i}{\hbar}\cdot x\cdot \xi^{\hbar}_t\right)
 \right]=\exp\left( -\frac{t}{\hbar}V(x)\right)
\end{align*}
with $V(x)=ibx+\frac{\sigma^2}{2}x^2-\int_{\mathbb{R}\setminus\{0\}}\left(
 e^{-ix k}-1\right) \nu(dk).$ The derivative of $\tilde{\eta}^{\hbar}_*(t,p)$ with respect to $p$ is
\begin{align}\label{eq:derivative-Feyn-Kac-formula-momen-end}
\nabla\tilde{\eta}^{\hbar}_*(t,p)=-\frac{1}{\hbar}\cdot\frac{1}{\sqrt{2\pi \hbar}}\mathbb{E}^\hbar\left[\tilde{G}(\xi^{\hbar})\cdot\exp\left(
 \frac{1}{\hbar}\tilde{F}(\xi^{\hbar})\right)
 \right].
\end{align}
It is again from \cite{Wentzell-LD-Markov-Processes-1986} and \cite{Yang-Annals-2012+} that expansions for \eqref{eq:Feyn-Kac-formula-momen-end} and \eqref{eq:derivative-Feyn-Kac-formula-momen-end} are
\begin{equation}\label{expansion-Feyn-Kac-formula-momen-end}
\begin{aligned}
\tilde{\eta}^{\hbar}_*(t,p)&=\frac{1}{\sqrt{2\pi \hbar}}\mathbb{E}^\hbar
 \left[
 \exp
 \left( \frac{1}{\hbar}\tilde{F}(\xi^{\hbar})\right) \right]
\\
&=\frac{1}{\sqrt{2\pi \hbar}}\exp\left(
 \frac{1}{\hbar}\left(
 \tilde{F}(\tilde{\phi}_*)-\tilde{S}(\tilde{\phi}_*)\right)
 \right)
 \cdot\left( K_0+o(1)\right)
\end{aligned}
\end{equation}
and
\begin{equation}\label{expansion-deriv-Feyn-Kac-formula-momen}
\begin{aligned}
\nabla\tilde{\eta}^{\hbar}_*(t,p)&=-\frac{1}{\hbar}\cdot\frac{1}{\sqrt{2\pi \hbar}}\mathbb{E}^\hbar\left[\tilde{G}(\xi^{\hbar})\cdot\exp\left(
 \frac{1}{\hbar}\tilde{F}(\xi^{\hbar})\right)
 \right]\\
&=-\frac{1}{\hbar}\cdot\frac{1}{\sqrt{2\pi \hbar}}\exp\left(
 \frac{1}{\hbar}\left( \tilde{F}(\tilde{\phi}_*)-\tilde{S}(\tilde{\phi}_*)\right)
 \right)
 \cdot\left(
 \bar{K}_0+\bar{K}_1\hbar+o(\hbar)\right),
\end{aligned}
\end{equation}
where $K_0=\mathbb{E}
 \left[
 \exp\left(
 \frac{1}{2}\tilde{F}''(\tilde{\phi}_*)(\zeta,\zeta)\right) \right] ,$
 $\bar{K}_0=\tilde{G}(\tilde{\phi}_*)\cdot K_0,$
\begin{align*}
\bar{K}_1&=\mathbb{E}\left[ \exp\left(
 \frac{1}{2}\tilde{F}''(\tilde{\phi}_*)(\zeta,\zeta)\right)
 \left(
 \frac{1}{2}\tilde{G}''(\tilde{\phi}_*)(\zeta,\zeta)+\frac{1}{6}\tilde{G}'(\tilde{\phi}_*)(\zeta)\tilde{F}^{(3)}(\tilde{\phi}_*)(\zeta^{\otimes^3})\right.\right.\\
&\qquad\qquad\qquad\qquad\quad\left.\left.+\frac{1}{24}\tilde{G}(\tilde{\phi}_*)\tilde{F}^{(4)}(\tilde{\phi}_*)(\zeta^{\otimes^4})+\frac{1}{72}\tilde{G}(\tilde{\phi}_*)\left(\tilde{F}^{(3)}(\tilde{\phi}_*)(\zeta^{\otimes^3})\right)^2\right)
 \right]=0
\end{align*}
for $\zeta$ being a diffusion process with diffusion coefficient $\sigma^2+\int_{\mathbb{R}\setminus\{0\}} e^{z_*(t)k}k^2\nu(dk)$ and $z_*(t)=L_0'(u)|_{u=\tilde{\phi}_*(t)}.$ It follows from combining \eqref{expansion-Feyn-Kac-formula-momen-end} and \eqref{expansion-deriv-Feyn-Kac-formula-momen} that
\begin{align*}
\hbar \frac{\nabla\tilde{\eta}^{\hbar}_*(t,q)}{\tilde{\eta}^{\hbar}_*(t,q)}&\sim\frac{-\exp\left( \frac{1}{\hbar}\left(
 \tilde{F}(\tilde{\phi}_*)-\tilde{S}(\tilde{\phi}_*)\right)
 \right)
 \cdot\left(
 \bar{K}_0+\bar{K}_1\hbar+o(\hbar)\right)
 }{\exp\left(
 \frac{1}{\hbar}\left( \tilde{F}(\tilde{\phi}_*)-\tilde{S}(\tilde{\phi}_*)\right) \right)
 \cdot\left( K_0+o(1)\right)
 }\sim -\left( \tilde{G}(\tilde{\phi}_*)+o(\hbar)\right)
\end{align*}
 as $\hbar\to 0,$ which completes the proof.
\end{proof}

\footnotesize 

\baselineskip0.5cm

\def\cprime{$'$} \def\polhk#1{\setbox0=\hbox{#1}{\ooalign{\hidewidth
  \lower1.5ex\hbox{`}\hidewidth\crcr\unhbox0}}}
  \def\polhk#1{\setbox0=\hbox{#1}{\ooalign{\hidewidth
  \lower1.5ex\hbox{`}\hidewidth\crcr\unhbox0}}} \def\cprime{$'$}

\end{document}